\theoremstyle{plain}
\newtheorem{theorem}{Theorem}
\newtheorem{lemma}{Lemma}
\newtheorem{conjecture}{Conjecture}
\newtheorem{proposition}{Proposition}
\newtheorem{observation}{Observation}
\theoremstyle{definition}
\newtheorem{definition}{Definition}
\newtheorem{remark}{Remark}
\newcommand{\A}{{\mathcal A}}
\newcommand{\B}{{\mathcal B}}
\newcommand{\D}{{\mathcal D}}
\newcommand{\tvG}{{\theta_{\text{v}}(G)}}
\newcommand{\teG}{{\theta_{\text{e}}(G)}}
\newcommand{\teH}{{\theta_{\text{e}}(H)}}
\newcommand{\ttG}{{\theta_{\triangle}(G)}}
\newcommand{\ttee}{\theta_{\triangle}}
\newcommand{\tKtG}{{\theta_{K_t}(G)}}
\newcommand{\tv}{{\theta_{\text{v}}}}
\newcommand{\te}{{\theta_{\text{e}}}}
\newcommand{\ttr}{{\theta_{\triangle}}}
\newcommand{\tKt}{{\theta_{K_t}}}
\newcommand{\caze}[2]{\textbf{Case {#1}:} \textit{#2}}
\newcommand{\sizeof}[1]{\left\lvert{#1}\right\rvert}
\newcommand{\st}{\colon\,}
\newcommand{\cee}{\mathcal{C}}
\newcommand{\fee}{\mathcal{F}}
\newcommand{\iw}{i_{w,t}}
\newcommand{\pw}{p_{w,t}}
\newcommand{\ints}{\mathbb{Z}}
\newcommand{\kay}{\mathcal{K}}
\newcommand{\lay}{\mathcal{L}}
\newcommand{\optpair}{\textsf{optpair}}
\newcommand{\floor}[1]{\left\lfloor #1 \right\rfloor}
\newcommand{\ceil}[1]{\lceil #1 \rceil}
\newcommand{\iso}{\cong}
\newcommand{\nonneg}{\ints_{\geq 0}}
\DeclareMathOperator{\KCC}{KCC}
\DeclareMathOperator{\cost}{cost}
\DeclareMathOperator{\valu}{val}
\newcommand{\erdos}{{Erd\"{o}s}}
\newcommand{\posa}{{P\'{o}sa}}
\newcommand{\turan}{{Tur\'{a}n}}
\newcommand{\bollobas}{{Bollob\'as}}
\begin{document}

\title{On the Triangle Clique Cover and $K_t$ Clique Cover Problems}

\author{Hoang Dau \and Olgica Milenkovic \and Gregory J.~Puleo}

\date{}

\maketitle 

\begin{abstract}
  An edge clique cover of a graph is a set of cliques that covers all
  edges of the graph.  We generalize this concept to \emph{$K_t$
    clique cover}, i.e. a set of cliques that covers all complete
  subgraphs on $t$ vertices of the graph, for every $t \geq 1$.  In
  particular, we extend a classical result of \erdos, Goodman, and
  P\'{o}sa~(1966) on the edge clique cover number
  ($t = 2$), also known as the intersection number, to the case $t =
  3$. The upper bound is tight, with equality holding only for the
  Tur\'an graph $T(n,3)$. As part of the proof, we obtain
    new upper bounds on the classical intersection number,
    which may be of independent interest. We also extend an algorithm of Scheinerman
  and Trenk~(1999) to solve a weighted version
  of the $K_t$ clique cover problem on a superclass of chordal graphs.
  We also prove that the $K_t$ clique cover problem is NP-hard.
\end{abstract}




\section{Introduction}
\label{sec:intro}

A \emph{clique} in a graph $G$ is a set of vertices that induces a
complete subgraph; all graphs considered in this paper are simple and
undirected. A \emph{vertex clique cover} of a graph $G$ is a set of
cliques in $G$ that collectively cover all of its vertices. The
\emph{vertex clique cover number} of $G$, denoted $\tv(G)$, is the
minimum number of cliques in a vertex clique cover of $G$. An
\emph{edge clique cover} of a graph $G$ is a set of cliques of $G$
that collectively cover all of its edges. The \emph{edge clique cover
  number} of $G$, denoted $\te(G)$, is the minimum number of cliques
in an edge clique cover. The vertex clique cover number, which is the
same as the chromatic number of the complement graph, and the
edge clique cover number, also referred to as the \emph{intersection
  number} of a graph, have been extensively studied in the literature
(see, for instance~\cite{ErdosGoodmanPosa1966, Karp1972,
  Roberts1985}).

\begin{figure}[htb]
\centering
\includegraphics[scale=1]{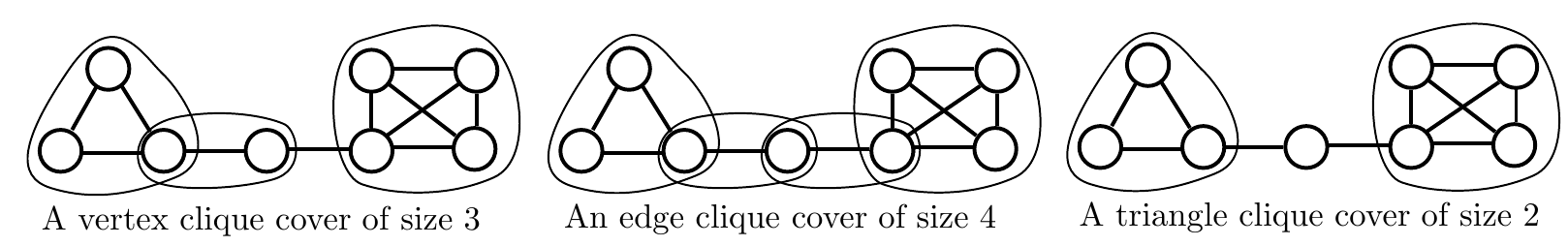}
\caption{An illustration of a minimum \emph{vertex} clique cover (left-most), a minimum \emph{edge} clique cover (middle), and a minimum \emph{triangle} clique cover (right-most) of the same graph on eight vertices.}
\label{fig:toy-example}
\end{figure}

We generalize the notions of vertex and edge clique covers by the following
definition. Note that we use the word \emph{clique} to refer to any
vertex set inducing a complete subgraph. 

\begin{definition}
  Let $t$ be a nonnegative integer. A \emph{$t$-clique} of $G$ is a clique containing exactly $t$ vertices. 
A set $\cee{}$ of cliques is a \emph{$K_t$ clique cover of $G$ if for every $t$-clique $S \subset V(G)$ there is a clique $Q \in \cee$ that covers $S$ (i.e., $S$ is a subgraph of $Q$).}
\end{definition}

A $K_1$ clique cover is simply a vertex
clique cover, while a $K_2$ clique cover is an edge clique cover. We
refer to a $K_3$ clique cover as a \emph{triangle clique cover}.  The
\emph{$K_t$ clique cover number}, denoted $\tKt$, and the
\emph{triangle clique cover number}, denoted $\ttr$, are also defined
accordingly.  We illustrate these concepts in
Fig.~\ref{fig:toy-example}.

For positive integers $n$ and $k$, the \emph{Tur\'{a}n graph} $T(n,k)$
is defined to be the complete $k$-partite graph on $n$ vertices whose
part sizes differ by at most $1$. Tur\'{a}n graphs frequently arise as
extremal graphs for various graph parameters. We pose the following conjecture, which
states that Tur\'{a}n graphs are the unique extremal graphs for the
$K_t$ clique cover problem.
\begin{conjecture} 
  \label{conjecture}
  If $n$ and $t$ are positive integers, then for every $n$-vertex graph $G$,
  \begin{equation} \tKtG \leq \tKt(T(n,t)). \label{eq:conjecture} \end{equation}
  Equality holds if and only if $G \iso T(n,t)$.
\end{conjecture}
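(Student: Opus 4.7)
The plan is a double induction on $(n, t)$, anchored by a vertex-peeling recursion. First I would compute $\tKt(T(n, t))$ explicitly: because $T(n, t)$ is $K_{t+1}$-free, every clique in $T(n, t)$ has at most $t$ vertices, so the only clique that can cover a given $t$-clique of $T(n,t)$ is the $t$-clique itself, and hence $\tKt(T(n, t))$ equals the number of $t$-cliques in $T(n, t)$, namely $\prod_{i=1}^{t} n_i$ where $n_1, \dots, n_t$ are the part sizes. Next I would establish the main recursion: for any vertex $v \in V(G)$,
$$\tKt(G) \leq \tKt(G - v) + \theta_{K_{t-1}}(G[N(v)]),$$
since every $t$-clique through $v$ corresponds to a $(t-1)$-clique in $G[N(v)]$, and appending $v$ to every clique of a $K_{t-1}$-cover of $G[N(v)]$ produces a family of $G$-cliques that covers all $t$-cliques through $v$.

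With the recursion in hand, I would induct on $n$ for fixed $t$, using an outer induction on $t$ to bound the inner $\theta_{K_{t-1}}$ term. Choosing $v$ of minimum degree $d = \delta(G)$ and applying both inductive hypotheses yields
$$\tKt(G) \leq \tKt(T(n-1, t)) + \theta_{K_{t-1}}(T(d, t-1)).$$
Using the explicit formula from Step~1, a direct calculation shows this upper bound is at most $\tKt(T(n, t))$ whenever $d \leq \delta(T(n, t)) = n - \lceil n/t \rceil$; the arithmetic balances exactly because the number of $t$-cliques through a minimum-degree vertex of $T(n,t)$ coincides with the number of $(t-1)$-cliques of $T(\delta(T(n,t)), t-1)$. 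The base cases $t = 1$ and $t = 2$ are respectively trivial and the classical \erdos--Goodman--\posa{} theorem.

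The main obstacle is the remaining regime $\delta(G) > \delta(T(n, t))$. In this range, \turan's theorem forces $G$ to contain a copy of $K_{t+1}$. My plan for exploiting this is to pick $v$ inside the $K_{t+1}$; then $G[N(v)]$ contains $K_t$, so in particular $G[N(v)] \not\iso T(d, t-1)$, and by the inductive uniqueness assertion $\theta_{K_{t-1}}(G[N(v)]) < \theta_{K_{t-1}}(T(d, t-1))$ strictly. Converting this strict gap into a quantitative savings large enough to absorb the degree excess $d - \delta(T(n,t))$ is the genuine technical difficulty, and I expect this is why the conjecture remains open for $t \geq 4$; a promising route is to develop refined intersection-number-style bounds for graphs containing a prescribed clique, analogous to the new upper bounds on $\te$ the authors announce for the $t = 3$ case.

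For the ``equality iff $G \iso T(n, t)$'' clause, I would track equality through every step. If $\tKtG = \tKt(T(n, t))$, the argument forces $\delta(G) = \delta(T(n, t))$ (ruling out the high-degree branch) together with equality in both inductive hypotheses, whence $G - v \iso T(n-1, t)$ and $G[N(v)] \iso T(d, t-1)$. A short combinatorial gluing argument --- showing that $v$ must be adjacent to exactly the vertices lying in some $t - 1$ parts of the \turan{} partition of $G - v$ and to no vertex of the remaining part --- would then force $G \iso T(n, t)$.
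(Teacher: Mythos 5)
You are proposing a proof of Conjecture~\ref{conjecture} in full generality, but the paper itself only establishes the cases $t\le 3$ (the $t=3$ case is Theorem~\ref{thm:upper_bound}), and for $t\ge 4$ the statement is open; your proposal does not close it, and you partly concede this yourself. Your framework coincides with the paper's $t=3$ strategy: peel off a vertex $v$, cover the $t$-cliques through $v$ by adding $v$ to the cliques of a $K_{t-1}$ cover of $H=G[N(v)]$, and compare $\theta_{K_{t-1}}(H)$ with $k_t(T(n,t))-k_t(T(n-1,3))$-type differences; your low-degree case $d(v)\le \delta(T(n,t))$ is exactly the paper's Case~1, with \erdos--Goodman--\posa{} playing the role of the inner inductive bound when $t=3$. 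The genuine gap is the regime $\delta(G)>\delta(T(n,t))$, and the remedy you sketch there cannot work as stated: the uniqueness clause of the inductive hypothesis only yields $\theta_{K_{t-1}}(H)\le\theta_{K_{t-1}}(T(d,t-1))-1$, an additive saving of a constant, whereas the deficit to be absorbed, roughly $\theta_{K_{t-1}}(T(d,t-1))-k_{t-1}(T(\delta(T(n,t)),t-1))$, grows on the order of $\bigl(d-\delta(T(n,t))\bigr)\,n^{t-2}$. For $t=3$ the paper does something substantively different: because $v$ has minimum degree, $H$ inherits minimum degree $\delta(H)\ge 2d(v)-n$, and the paper proves new minimum-degree upper bounds on $\te(H)$ (Lemmas~\ref{lem:basicdelta} and~\ref{lem:plushalf_plusone}, refining Lov\'asz's Theorem~\ref{thm:lovasz}) that beat the Tur\'an-extremal bound by a quantity growing with the degree excess; even then the boundary values $d(v)=(2n+1)/3$ and $(2n+2)/3$ need separate delicate treatment. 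If you wanted to push this route to $t=4$ you would need a minimum-degree version of Theorem~\ref{thm:upper_bound} itself, and the paper explains after its proof why that already runs into trouble (several types of $K_3$'s relative to the greedy partition $A_1,\dots,A_p$ must be covered and counted). Your alternative suggestion, bounds for graphs containing a prescribed clique, is a different and unproved lemma, not the paper's.

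A secondary gap is your equality analysis. You assert that $\tKtG=\tKt(T(n,t))$ forces $\delta(G)=\delta(T(n,t))$, i.e.\ that the high-degree branch is automatically strict. Already for $t=3$ this is not automatic at the boundary: when $d(v)=(2n+1)/3$ the estimate gives $\te(H)\le (n-1)^2/9=k_3(T(n,3))-k_3(T(n-1,3))$ with equality possible, and the paper must break the extremal case by a separate construction (Case~3a: replacing each triangle of $H$ appearing in an optimal cover of $G-v$ by that triangle together with $v$, which covers $G$ with only $k_3(T(n-1,3))$ cliques). So even in the range where your counting goes through, the ``equality iff $G\iso T(n,t)$'' clause requires an argument you have not supplied.
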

As motivation for Conjecture~\ref{conjecture}, we now briefly consider
the cases $t=1$ and $t=2$.

When $t = 1$, it is obvious that $\theta_{K_1}(G) = \tvG \leq n$ for
every graph $G$ on $n$ vertices.  Equality clearly holds if and only
if $G$ has no edges; a graph with no edges is the trivial Tur\'{a}n
graph $T(n,1)$. \erdos, Goodman, and
P\'{o}sa~\cite{ErdosGoodmanPosa1966} proved that
$\theta_{K_2}(G) = \teG \leq \lfloor \frac{n^2}{4} \rfloor$ for every
graph $G$, with equality holding if and only if $G$ is the Tur\'an
graph $T(n,2) = K_{\floor{n/2}, \ceil{n/2}}$.

\begin{table}
  \centering
  \begin{tabular}{|l|l|l|l|l|}
    \hline
    & $t = 1$ & $t = 2$ & $t = 3$ & $t > 3$\\
    \hline
    \eqref{eq:conjecture} holds & YES & YES & YES & Open\\
    \hline
    Equality & $G=T(n,1)$ & $G=T(n,2)$ & $G=T(n,3)$ & Open\\
    \hline
    Proof & Trivial & Erd\"{o}s, Goodman, and Posa~\cite{ErdosGoodmanPosa1966} & 
                                                                                 This paper & Open\\
    \hline
  \end{tabular}
  \caption{Status of Conjecture~\ref{conjecture}. }
  \label{table:conjstatus}
\end{table}

In Sections~\ref{sec:mindeg-lovasz} and~\ref{sec:upper}, we prove the
$t=3$ case of Conjecture~\ref{conjecture}. At the end of
Section~\ref{sec:upper}, we also discuss the connections between
Conjecture~\ref{conjecture} and a theorem of Lehel~\cite{Lehel1982}
about covering edges in hypergraphs. The status of
Conjecture~\ref{conjecture} is summarized in
Table~\ref{table:conjstatus}.

We also consider a natural weighted version of the $K_t$ clique cover problem in Section~\ref{sec:weightkt}, in which each $t$-clique $S$ is assigned a nonnegative integer weight $w_S$; we seek a smallest multiset of cliques such that each $t$-clique $S$ is covered at least $w_S$ times. We give a
polynomial-time algorithm to solve this problem on a superclass of chordal graphs, extending a result of Scheinerman and
Trenk~\cite{scheinerman-trenk}. The specific class of graphs
for which our algorithm works, which we call \emph{semichordal graphs},
is defined and discussed in Section~\ref{sec:superclass}.


It was shown by Orlin~\cite{Orlin1977} and by Kou, Stockmeyer, and
Wong~\cite{KouStockmeyerWong78} that determining $\teG$ is an
NP-complete problem.  Their idea is to reduce the problem of
determining $\tvG$, which was known to be NP-complete, to the problem
of determining $\teG$. In Section~\ref{sec:complexity}, we generalize
the reduction used in~\cite{KouStockmeyerWong78} to show, by
induction, that determining $\tKtG$ is an NP-complete problem, for any
constant $t \geq 2$, by reducing the problem of determining
$\theta_{K_{t-1}}(G)$ to the problem of determining $\theta_{K_t}(G)$.

Our study of triangle clique covers and $K_t$ clique covers is motivated by the recent developments in the literature of \emph{community detection} (a.k.a network clustering). Complex networks such as social networks and biological networks often exhibit strong community structures in which nodes are clustered into different communities based on dense intra-community connections and sparse inter-community connections. Cliques or clique-like substructures in the network are often used to model such communities which usually represent groups of people with common affiliations or interests in social networks, or disciplines in the citation networks, or functional modules in the protein-protein interaction networks \cite{Palla-etal-2005, Bansal-etal-2004, Leskovec-etal-2010}. Most community detection algorithms are based on pairwise connections among nodes in the network. Nevertheless, recent work~\cite{Agarwal-etal-2005, Benson-etal-2016, Li-etal-2017, Li-etal-NIPS-2017} has revealed the importance of \emph{motifs}, that is, subgraphs of the graph that appear with a frequency exceeding the one predicted through certain random models. Examples of motifs include triangles and small cliques and near-cliques, the former arising due to the principle of \emph{triadic closure}, which assumes that two people having a common friend will be more likely to be connected~\cite{granovetter1973, Bianconi-etal-2014}. The problem of covering triangles with cliques corresponds to the problem of network clustering with triangle motifs in the ideal setting where each community is a clique.  

 
\section{A Minimum-Degree Version of a Theorem of Lov\'asz}\label{sec:mindeg-lovasz}
In this section, we obtain results similar to the following theorem of
Lov\'asz~\cite{lovasz-cover}. In the next section, we will apply these
results to obtain an upper bound on $\ttG$.
\begin{theorem}[Lov\'asz~\cite{lovasz-cover}]\label{thm:lovasz}
  Let $G$ be an $n$-vertex graph, and let
  $k = {n \choose 2} - \sizeof{E(G)}$.  If $t$ is the greatest integer
  such that $t^2-t \leq k$, then $\te(G) \leq k+t$.
\end{theorem}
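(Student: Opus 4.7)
My plan is to produce an edge clique cover of $G$ of size at most $k+t$. The base case $k=0$ is immediate: then $G=K_n$, $\te(G)=1$, and the largest integer with $t^2-t\leq 0$ is $t=1$, so $k+t=1$.

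For $k\geq 1$, the first approach I would try is induction on $k$. Pick a non-edge $uv$ of $G$ and form $G'=G+uv$, which has $k-1$ non-edges. By the inductive hypothesis, $G'$ admits a cover $\cee'$ with $\sizeof{\cee'}\leq (k-1)+t'$, where $t'$ is the largest integer satisfying $(t')^2-t'\leq k-1$; a small calculation shows $t'\in\{t-1,t\}$. Every clique $Q\in\cee'$ with $\{u,v\}\subseteq Q$ fails to be a clique in $G$, and replacing such a $Q$ by $Q\setminus\{u\}$ and $Q\setminus\{v\}$ preserves coverage of all edges of $Q$ other than $uv$, at a cost of $+1$ clique. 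If $s$ cliques of $\cee'$ contain both $u$ and $v$, the resulting cover of $G$ has size at most $(k-1)+t'+s$, so the induction closes provided $s\leq t-t'+1$. The delicate point is showing that a non-edge $uv$ respecting this inequality can always be chosen, perhaps by selecting $uv$ inside a part of an optimal cover of $G'$ and exploiting the minimality of $\cee'$.

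Alternatively, I would consider a direct construction based on a greedy partition of $V(G)$ into cliques of $G$. Start with the singleton partition and repeatedly merge two parts whenever their union is a clique in $G$; at termination we obtain a partition $V_1,\ldots,V_p$ where every pair $V_i,V_j$ has at least one non-edge across it, so $\binom{p}{2}\leq k$. Using the $p$ parts as cliques covers all intra-part edges, and it remains to cover the cross-part edges of $G$ with at most $k+t-p$ additional cliques.

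I expect the main obstacle in either approach to be the precise bookkeeping tied to the condition $t^2-t\leq k$: in the inductive approach, selecting a non-edge $uv$ with the required bound on $s$; in the direct approach, covering the potentially large set of cross-part edges with only $k+t-p$ further cliques, matching extras against the slack that the assumption provides. I anticipate that this balancing is the decisive step, likely requiring a charging argument that pairs each additional clique beyond the $p$ parts with a small bounded number of non-edges of $G$, so that the $k+t$ budget is respected.
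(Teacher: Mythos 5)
Neither route in your proposal is a proof, and in both cases the missing piece is exactly the content of the theorem. In the induction on $k$: after adding the non-edge $uv$ to form $G'$, you need an optimal cover $\cee'$ of $G'$ in which the edge $uv$ lies in at most $1+(t-t')$ cliques, i.e.\ $s\le 2$ (and often $s\le 1$). Nothing in the minimality of $\cee'$ bounds the multiplicity of a fixed edge, and you give no mechanism for choosing $uv$ or for modifying $\cee'$ to enforce this; you flag it yourself as ``the delicate point,'' and it is precisely where the argument fails to close. In the direct construction: your merge-based partition only guarantees \emph{one} non-edge between each pair of parts, i.e.\ $\binom{p}{2}\le k$. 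That is too weak in two respects: it bounds the number of parts only by roughly $\sqrt{2k}$, whereas the budget allows only about $t\approx\sqrt{k}$ cliques beyond the $k$ paid for by non-edges, and, more importantly, it gives no way to charge the cross cliques to non-edges, since a single vertex can be adjacent to \emph{all} of another part even though the two parts cannot be merged.

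For comparison (the paper itself does not prove Theorem~\ref{thm:lovasz}; it quotes Lov\'asz, but the proof skeleton is exactly what is mimicked in Lemma~\ref{lem:basicdelta}): peel off \emph{maximum} cliques, letting $A_1$ be a maximum clique of $G$ and $A_i$ a maximum clique of $G-(A_1\cup\cdots\cup A_{i-1})$. Maximality gives the pointwise guarantee your partition lacks: every $v\in A_i$ has at least one non-neighbor in every earlier part $A_j$. Cover the edges by the $q$ parts of size at least $2$ together with the cliques $S_{v,j}=\{v\}\cup(N(v)\cap A_j)$ for $v\in A_i$, $j<i$, included only when $N(v)\cap A_j\neq\emptyset$. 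Each included $S_{v,j}$ is charged to a non-edge from $v$ into $A_j$, and distinct pairs $(v,j)$ receive distinct non-edges, so at most $k$ cross cliques are used; moreover, since $A_1,\dots,A_q$ all have size at least $2$, the same guarantee produces at least $\sum_{i=2}^{q}(i-1)a_i\ge 2\binom{q}{2}=q^2-q$ non-edges, so $q^2-q\le k$ and hence $q\le t$ by the choice of $t$. The total is at most $k+t$. This injective charging of cross cliques to non-edges, together with the bound $q\le t$ on the non-singleton parts (singleton parts need no clique), is the ``decisive step'' you anticipated but did not supply, and it genuinely requires the maximum-clique peeling rather than an arbitrary merge-maximal partition.
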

In this section, we obtain a variant of Theorem~\ref{thm:lovasz} by
strengthening the hypothesis to include a lower bound on $\delta(G)$,
the minimum degree of $G$, rather than just a lower bound on
$\sizeof{E(G)}$. We start with a basic result and then strengthen its
weakest nontrivial case.
\begin{lemma}\label{lem:basicdelta}
  For any graph $G$, $\te(G) \leq (n - \delta(G)) + \frac{n(n-\delta(G)-1)}{2}$.
\end{lemma}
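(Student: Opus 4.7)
The plan is to pick a vertex $v$ with $d(v) = \delta := \delta(G)$ and construct the edge clique cover in two stages. Since every vertex has at most $n - 1 - \delta$ non-neighbors in $G$, the complement $\bar{G}$ has maximum degree at most $n - 1 - \delta$; greedy coloring of $\bar{G}$ therefore uses at most $n - \delta$ colors, which yields a partition of $V(G)$ into $n - \delta$ cliques $K_1, \ldots, K_{n-\delta}$ of $G$. I would take these as the first $n - \delta$ members of the cover, handling every edge whose endpoints lie in the same part.

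For each remaining (inter-part) edge, I would add a single $2$-clique, obtaining $\te(G) \le (n - \delta) + E_{\mathrm{inter}}$, where $E_{\mathrm{inter}}$ denotes the number of edges of $G$ whose endpoints lie in distinct parts. Since each $K_i$ is a clique of $G$, every non-edge of $G$ must lie between different parts, and hence
\[
 E_{\mathrm{inter}} = \binom{n}{2} - \sum_{i=1}^{n-\delta} \binom{|K_i|}{2} - |E(\bar{G})|.
\]
To finish, I would combine the degree-based bound $|E(\bar{G})| \le \tfrac{n(n - 1 - \delta)}{2}$ with a suitable lower bound on $\sum_i \binom{|K_i|}{2}$, for instance via Jensen's inequality applied to the convex function $\binom{\cdot}{2}$ over partitions of $n$ into $n - \delta$ positive parts.

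The main obstacle I anticipate is that a generic balanced partition may not supply a large enough value of $\sum_i \binom{|K_i|}{2}$: a vanilla Cauchy--Schwarz/Jensen estimate yields only $\tfrac{n\delta}{2(n-\delta)}$, which falls short of the target when $\delta$ is small. Closing this gap likely requires either a careful choice of partition --- for example, greedy coloring of $\bar{G}$ in decreasing degree order, so that large cliques of $G$ form the early color classes --- or a direct double-counting argument that trades off the available non-edge budget $|E(\bar{G})|$ against the clique sizes in the partition. An alternative route, for the regime $\delta \le n/2$, would be to apply Theorem~\ref{thm:lovasz} with $k = |E(\bar{G})|$ and verify that the resulting $t$ from Lov\'asz's theorem satisfies $t \le n - \delta$ once $k$ is bounded by $\tfrac{n(n-1-\delta)}{2}$.
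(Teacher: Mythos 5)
Your first stage (a greedy coloring of $\overline{G}$, giving a partition of $V(G)$ into at most $n-\delta(G)$ cliques) is in the spirit of the paper's construction, but the second stage --- covering every inter-part edge by its own $2$-clique --- is where the argument breaks, and not merely because of a weak estimate: the inequality you would need, $E_{\mathrm{inter}} \le \frac{n(n-1-\delta)}{2}$, is false for some graphs no matter which partition into at most $n-\delta$ cliques is chosen. Take $G = T(3m,3)$, so $n=3m$ and $\delta = 2m$. Every clique of $G$ has at most $3$ vertices, so any partition into at most $m$ cliques covers at most $3m$ edges internally, leaving $E_{\mathrm{inter}} \ge 3m^2 - 3m$ cross edges, whereas $\frac{n(n-1-\delta)}{2} = \frac{3m(m-1)}{2}$ is only half of that. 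Hence your bound $(n-\delta)+E_{\mathrm{inter}}$ is about $3m^2$ while the lemma claims about $\frac{3m^2}{2}$; no careful choice of partition or double-counting of the non-edge budget can rescue a cover that spends one clique per cross edge.

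The missing idea is to cover cross edges in bulk. The paper peels off cliques $A_1,\ldots,A_p$ greedily (each $A_i$ a maximum clique of $G-(A_1\cup\cdots\cup A_{i-1})$, which forces $p \le n-\delta(G)$ because each vertex of $A_p$ has a non-neighbor in every earlier $A_j$), and then, for each $v \in A_i$ and each $j<i$, covers \emph{all} edges from $v$ into $A_j$ with the single clique $S_{v,j} = \{v\}\cup(N(v)\cap A_j)$. The cross edges thus cost only $\sum_i (i-1)a_i \le \frac{n(p-1)}{2}$ cliques rather than $E_{\mathrm{inter}}$ many, and adding the $p$ cliques $A_i$ gives the stated bound (in the tripartite example this is $3\binom{m}{2}\approx\frac{3m^2}{2}$ versus your $3m^2-3m$). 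Your fallback via Theorem~\ref{thm:lovasz} does work when $\delta \le n/2$, since then $k \le \frac{n(n-1-\delta)}{2} \le (n-\delta)(n-\delta-1)$ and hence $t \le n-\delta$; but it cannot prove the lemma in general: for $\delta > n/2$ Lov\'asz's $t$ can exceed $n-\delta$ (the paper's own example $n=12$, $\delta=9$ gives $16$ versus the lemma's $15$), and that is precisely the regime in which Lemma~\ref{lem:basicdelta} is invoked in the proof of Theorem~\ref{thm:upper_bound}.
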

\begin{proof}
  We mimic the proof of Theorem~\ref{thm:lovasz}.  Let $A_1$ be a
  maximum clique in $G$, and for $i > 1$, let $A_i$ be a maximum
  clique in $G-(A_1 \cup \cdots \cup A_{i-1})$.  Set
  $a_i = \sizeof{A_i}$. Let $p$ be the largest index for which $a_i > 0$.  As each vertex of $A_p$ has at least one non-neighbor
  in $A_j$ for $j < p$, we have $p \leq n - \delta(G)$.  For each
  $v \in A_i$ and each $j < i$, let
  $S_{v,j} = \{v\} \cup (N(v) \cap A_j)$.  Each set $S_{v,j}$ is
  clearly a clique.  Let $\fee$ be the set consisting of all cliques
  $A_i$ together with all the cliques $S_{v,j}$ where $v \in A_i$ and
  $j < i$. The set $\fee$ covers all edges of $G$, and we have
  \[ \te(G) \leq \sizeof{\fee} \leq p + \sum_{i=1}^{p}(i-1)a_i. \]
  Subject to the constraints $a_1 + \ldots + a_p = n$ and
  $a_1 \geq \cdots \geq a_p$, and allowing $a_i$ to take fractional
  values, the sum $\sum_{i=1}^p (i-1)a_i$ is clearly maximized when
  $a_1 = \cdots = a_p = n/p$. Hence,
  \[ \te(G) \leq p + \frac{n}{p}\sum_{i=1}^p(i-1) = p + \frac{n(p-1)}{2}. \]
  As $p \leq n-\delta(G)$, the conclusion follows.
\end{proof}
Note that a lower bound on $\delta(G)$ indirectly gives an upper bound on $\te(G)$ via Theorem~\ref{thm:lovasz}, since we have $k \geq \frac{n(n-\delta(G)-1)}{2}$. However, the upper bound in Lemma~\ref{lem:basicdelta} is often sharper than the bound guaranteed this way in Theorem~\ref{thm:lovasz}; for example, when $n=12$ and $\delta(G) = 9$, Theorem~\ref{thm:lovasz} gives an upper bound of $\te(G) \leq 16$, while Lemma~\ref{lem:basicdelta} gives an upper bound of $15$.

When $\delta(G) = n/2$, Lemma~\ref{lem:basicdelta} gives $\te(G) \leq n^2/4$, which
is sharp when $G = K_{n/2, n/2}$. We wish to obtain a sharper bound when $\delta(G)$
is slightly larger than $n/2$.

\begin{lemma}\label{lem:plushalf_plusone}
The following upper bounds hold for the edge clique cover number of a graph $G$.
\[
\te(G) \leq
\begin{cases}
\frac{n^2}{4} - \frac{n}{2} + \frac{1}{4},&\text{ if } \delta(G) = (n+1)/2,\\
\frac{n^2}{4} - n + 2, &\text{ if } \delta(G) = n/2+1.
\end{cases}
\]
\end{lemma}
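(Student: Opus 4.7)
The plan is to run the proof of Lemma~\ref{lem:basicdelta} verbatim through the inequality $\te(G) \leq p + \sum_{i=1}^p (i-1) a_i$, where the $a_i = |A_i|$ form a non-increasing sequence of positive integers summing to $n$, with $p \leq n - \delta(G)$. The reason the basic bound falls just short of the estimates claimed here is that it relaxes the $a_i$ to real numbers and concludes $\sum (i-1) a_i \leq n(p-1)/2$; the refinement I propose is to preserve integrality and observe that the true integer maximum is strictly smaller than this continuous relaxation, by precisely the amount the new bound requires.

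The main technical step is to show that, over non-increasing positive-integer sequences $a_1 \geq \cdots \geq a_p \geq 1$ summing to $n$, the sum $\sum_{i=1}^p (i-1) a_i$ is maximized at the ``most equal'' partition --- namely $a_i = q+1$ for $i \leq r$ and $a_i = q$ for $i > r$, where $n = pq + r$ with $0 \leq r < p$ --- yielding the value $qp(p-1)/2 + r(r-1)/2$. I would prove this with a standard exchange argument: whenever $a_i - a_{i+1} \geq 2$, replacing $(a_i, a_{i+1})$ by $(a_i - 1, a_{i+1} + 1)$ preserves the non-increasing ordering and increases the objective by exactly $1$, so iterating terminates at the most equal partition. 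A brief supplementary check then shows that $p + \sum(i-1) a_i$ evaluated at this most equal partition is non-decreasing in $p$ as long as $p \leq n/2$ (one can split the tail entry into two entries of size $a_p - 1$ and $1$, which is valid precisely when the smallest part is at least $2$, i.e.\ when $\lfloor n/p \rfloor \geq 2$), so the largest value of the bound over all valid $p$ is attained at $p = n - \delta(G)$.

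With these ingredients in hand, the two cases of the lemma become direct plug-ins. For $\delta(G) = (n+1)/2$ (so $n$ is odd and $p = (n-1)/2$) one has $q = 2$, $r = 1$, giving $\te(G) \leq p + p(p-1) = p^2 = (n-1)^2/4$; for $\delta(G) = n/2+1$ with $p = n/2 - 1 \geq 3$ one has $q = 2$, $r = 2$, giving $\te(G) \leq p^2 + 1 = (n-2)^2/4 + 1$. Both expressions agree with the forms in the lemma. The step I expect the most friction from is the residual check for the small cases $n \in \{4, 6\}$, which fall outside the generic $(q,r) = (2,2)$ regime; fortunately these are immediate, since $n = 4$ forces $G = K_4$, and for $n = 6$ the optimal partition $(3,3)$ yields the matching value $2 + 3 = 5 = n^2/4 - n + 2$.
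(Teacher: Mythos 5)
Your overall architecture matches the paper's: run the greedy construction from Lemma~\ref{lem:basicdelta} to get $\te(G)\le p+\sum_{i=1}^p(i-1)a_i$, exploit integrality of the $a_i$, identify the balanced (``most equal'') partition as the maximizer of $\sum(i-1)a_i$, and plug in; your final values $p^2=\frac{(n-1)^2}{4}$ and $p^2+1=\frac{(n-2)^2}{4}+1$ agree with the paper, which uses exactly the sequences $(3,2,\dots,2)$ and $(3,3,2,\dots,2)$. The one place where you handle things differently is the case $p<n-\delta(G)$: the paper disposes of it (its Case~1) by reusing the continuous bound $p+\frac{n(p-1)}{2}$ with the smaller $p$, whereas you argue monotonicity of the bound in $p$ via splitting the last part into $a_p-1$ and $1$; your splitting argument is valid (it needs $a_p\ge 2$, which holds for the balanced partition whenever $p\le n/2$, and all relevant $p$ satisfy this) and is arguably more uniform.

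There is, however, a genuine flaw in your proof of the key maximization claim, which the paper merely asserts. Your exchange rule only acts on \emph{adjacent} pairs with $a_i-a_{i+1}\ge 2$, so the process terminates at any sequence whose consecutive differences are all at most $1$ --- and such terminal states need not be the balanced partition. For example, with $n=9$, $p=4$ (the regime $\delta(G)=(n+1)/2$), the sequence $(3,3,2,1)$ has no adjacent gap of size $2$, yet its objective is $0\cdot 3+1\cdot 3+2\cdot 2+3\cdot 1=10$, strictly less than the balanced $(3,2,2,2)$ with objective $12$; so ``iterating terminates at the most equal partition'' is false, and your argument only shows the maximizer is some staircase-like sequence, not that its value equals $q\,\frac{p(p-1)}{2}+\frac{r(r-1)}{2}$. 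The fix is standard: whenever $\max_i a_i\ge \min_i a_i+2$, decrement the \emph{last} position attaining the maximum and increment the \emph{first} position attaining the minimum; this preserves the non-increasing order, strictly increases the objective by $j-i\ge 1$, and terminates exactly when all parts differ by at most $1$, i.e.\ at the balanced partition. With that repaired exchange (or any direct argument that the balanced partition maximizes), the rest of your proof, including the $n\in\{4,6\}$ side cases, goes through and reproduces the lemma.
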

\begin{proof}
  Define $A_1, \ldots, A_p$ and $\fee$ as in the proof of Lemma~\ref{lem:basicdelta}.
  With $a_i = \sizeof{A_i}$, we have the bound
  \[ \te(G) \leq \sizeof{\fee} \leq p + \sum_{i=1}^{p}(i-1)a_i. \]
  
\textbf{Case 1:} $p < n-\delta(G)$. If $\delta(G) = (n+1)/2$, then repeating the argument in Lemma~\ref{lem:basicdelta} yields
  \[ \te(G) \leq p + \frac{n(p-1)}{2} \leq \frac{n^2}{4} - \frac{3n}{4} - \frac{3}{2} < \frac{n^2}{4} - \frac{n}{2} + \frac{1}{4}. \]
If $\delta(G) = n/2 + 1$, then similarly,
  \[ 
	\te(G) \leq p + \frac{n(p-1)}{2} \leq \frac{n^2}{4} - n - 2
	< \frac{n^2}{4} - n + 2. \]

\textbf{Case 2:} $p = n-\delta(G)$. We now exploit the integrality of $a_i$, which we ignored in the proof of Lemma~\ref{lem:basicdelta}. If $\delta(G) = (n+1)/2$ then $p = (n-1)/2$ and the sum $\sum_{i=1}^p (i-1)a_i$ is maximized, subject to $a_1 + \ldots + a_p = n$  and $a_1 \geq \cdots \geq a_p$, by the sequence with $a_1 = 3$ and $a_i = 2$ for $i \geq 2$.
  Hence, we obtain the upper bound
\[ 
\te(G) \leq p + 2\sum_{i=2}^p (i-1) = p + p(p-1) = \frac{n^2}{4} - \frac{n}{2} + \frac{1}{4}. 
 \]
Similarly, when $\delta(G) = n/2 + 1$ and $p = n/2-1$,
the sum $\sum_{i=1}^p (i-1)a_i$ is maximized by the sequence with $a_1 = a_2 = 3$ and $a_i = 2$ for $i \geq 3$. We obtain the upper bound
\[ 
\te(G) \leq p + 3 + 2\sum_{i=3}^p(i-1) = p+3 + (p(p-1) - 2) = p+1+p(p-1) = \frac{n^2}{4} - n + 2. 
\]
Thus, the claimed upper bounds on $\teG$ hold in both cases.
\end{proof}

\section{An Upper Bound on $\ttG$}\label{sec:upper}
In this section, our goal is to generalize the following result of \erdos, Goodman, and P\'osa.
\begin{theorem}[\erdos--Goodman--Posa~\cite{ErdosGoodmanPosa1966}]\label{thm:EGP}
  If $G$ is an $n$-vertex graph, then $\te(G) \leq \floor{n^2/4}$.
  Equality holds if and only if $G \iso T(n,2)$.
\end{theorem}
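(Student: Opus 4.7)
The plan is to induct on $n$, handle the trivial base cases $n \leq 2$, and split the inductive step by $\delta(G)$. The guiding intuition is that a vertex-deletion argument suffices as long as $\delta(G) \leq \floor{n/2}$, while for $\delta(G) > \floor{n/2}$ the minimum-degree bounds from Section~\ref{sec:mindeg-lovasz} already beat $\floor{n^2/4}$ outright, which immediately rules out equality graphs in that regime.

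In the low-degree case, pick $v$ with $d(v) = \delta(G) \leq \floor{n/2}$. Covering each edge at $v$ by a single $K_2$ and appealing to induction on $G - v$ yields
\[ \te(G) \leq d(v) + \te(G-v) \leq \floor{n/2} + \floor{(n-1)^2/4} = \floor{n^2/4}, \]
via the identity $\floor{n^2/4} = \floor{(n-1)^2/4} + \floor{n/2}$. For the high-degree case, I would apply Lemma~\ref{lem:plushalf_plusone} at the two borderline values $\delta(G) = (n+1)/2$ (odd $n$) and $\delta(G) = n/2 + 1$ (even $n$), and Lemma~\ref{lem:basicdelta} for $\delta(G) \geq n/2 + 2$; each yields a bound strictly below $\floor{n^2/4}$ via a short arithmetic check, so the equality characterization can only be realized in the low-degree branch.

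For the equality analysis, $\te(G) = \floor{n^2/4}$ then forces $d(v) = \floor{n/2}$ and $\te(G-v) = \floor{(n-1)^2/4}$, so by induction $G - v \iso T(n-1, 2)$. The crux is to show $N(v)$ is independent in $G - v$: if $ab \in E(G)$ for some $a, b \in N(v)$, I would build an improved cover of size $\te(G-v) + d(v) - 1$ by starting from an optimal cover of $G-v$, adding the triangle $\{v, a, b\}$, and adding the edges $\{v, u\}$ for $u \in N(v) \setminus \{a, b\}$, contradicting $\te(G) = \floor{n^2/4}$. Since the only independent sets of size $\floor{n/2}$ in $K_{\floor{(n-1)/2}, \ceil{(n-1)/2}}$ are the parts of its bipartition (the unique larger part when $n$ is even; either of the two equal parts when $n$ is odd), joining $v$ to a full part yields exactly $G \iso T(n, 2)$.

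The main obstacle I expect is precisely this equality analysis, and in particular the triangle-swap argument: I must verify that removing the two $K_2$'s $\{va\}, \{vb\}$ from the cover leaves all edges covered (which holds since $ab$ is still covered by the cover of $G-v$ and by the new triangle), and then confirm that an independent set of size $\floor{n/2}$ in $T(n-1,2)$ really must be a full part in both parities. The high-degree case calculations comparing $n^2/4 - O(n)$ to $\floor{n^2/4}$ are routine, but must be done separately for even and odd $n$.
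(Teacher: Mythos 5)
The paper does not actually prove Theorem~\ref{thm:EGP}: it is quoted from \cite{ErdosGoodmanPosa1966} and used as a black box, so there is no in-paper proof to compare against. Your argument is a genuinely different route from the classical one (which deletes an adjacent pair $u,v$ and covers all edges meeting $\{u,v\}$ with at most $n-1$ triangles and edges, using $\floor{n^2/4}-\floor{(n-2)^2/4}=n-1$), and it is essentially correct: the identity $\floor{n^2/4}=\floor{(n-1)^2/4}+\floor{n/2}$, the forcing of $d(v)=\floor{n/2}$ and $\te(G-v)=\floor{(n-1)^2/4}$ in the equality case, the triangle-swap showing $N(v)$ is independent in $G-v\iso T(n-1,2)$, and the identification of a $\floor{n/2}$-element independent set of $T(n-1,2)$ with a full part all check out. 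Leaning on Lemmas~\ref{lem:basicdelta} and~\ref{lem:plushalf_plusone} is legitimate and non-circular (their proofs are greedy and never invoke Theorem~\ref{thm:EGP}); what this buys is that the high-minimum-degree counting is outsourced to the paper's machinery, at the cost of a proof that is longer and less self-contained than the classical one.

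Two small holes should be patched. First, your case split in the high-degree regime misses, for odd $n$, the value $\delta(G)=(n+3)/2$, which is neither your borderline value $(n+1)/2$ nor $\ge n/2+2$. It is covered by Lemma~\ref{lem:basicdelta}, whose bound is decreasing in $\delta(G)$ and at $\delta(G)=(n+3)/2$ equals $\frac{n-3}{2}+\frac{n(n-5)}{4}=\frac{n^2-3n-6}{4}<\frac{n^2-1}{4}=\floor{n^2/4}$, so strictness persists; the clean statement of the split is ``the single borderline value per parity goes to Lemma~\ref{lem:plushalf_plusone}, and every larger $\delta(G)$ goes to Lemma~\ref{lem:basicdelta}.'' Second, you only argue the ``only if'' direction of the equality claim; the theorem also asserts $\te(T(n,2))=\floor{n^2/4}$, which needs a (one-line) argument: $T(n,2)$ is triangle-free, so every clique covers at most one of its $\floor{n/2}\cdot\ceil{n/2}=\floor{n^2/4}$ edges, whence no cover can use fewer cliques than edges. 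With these two additions the proof is complete.
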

\begin{definition}
  When $G$ is a graph and $r$ is a nonnegative integer, $k_t(G)$
  is the number of copies of $K_t$ in $G$.
\end{definition}
\begin{observation}
  For any nonnegative integer $n$,
  \[ k_3(T(n,3)) = \begin{cases}
\dfrac{n^3}{27}, &\text{ if } n \equiv 0 \pmod 3,\vspace{.5em}\\
\dfrac{(n-1)^3}{27} + \dfrac{(n-1)^2}{9}, &\text{ if } n \equiv 1 \pmod 3,\vspace{.5em}\\
\dfrac{(n+1)^3}{27} - \dfrac{(n+1)^2}{9}, &\text{ if } n \equiv 2 \pmod 3. 
\end{cases} \]
\end{observation}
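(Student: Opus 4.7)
The plan is to observe that this is purely a counting identity about a specific graph, so the proof reduces to a case analysis on $n \bmod 3$ combined with a single structural fact about triangles in complete multipartite graphs.

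First I would note that $T(n,3)$ is complete $3$-partite, meaning the vertex set is partitioned into three independent parts with every inter-part pair adjacent. Any triangle in $T(n,3)$ must therefore use three pairwise non-independent vertices; since no two vertices of the same part are adjacent, no two of the three vertices may lie in the same part. Hence each triangle picks exactly one vertex from each part, and conversely every such choice gives a triangle. Writing the part sizes as $n_1, n_2, n_3$, we get
\[
k_3(T(n,3)) = n_1 n_2 n_3.
\]

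Next I would split into the three residue classes of $n \bmod 3$, using that the parts differ in size by at most $1$. When $n \equiv 0 \pmod 3$, all three parts have size $n/3$, so $k_3 = (n/3)^3 = n^3/27$. When $n \equiv 1 \pmod 3$, the parts have sizes $\tfrac{n-1}{3}, \tfrac{n-1}{3}, \tfrac{n-1}{3}+1$, giving
\[
k_3 = \Bigl(\tfrac{n-1}{3}\Bigr)^{\!2}\Bigl(\tfrac{n-1}{3}+1\Bigr) = \frac{(n-1)^3}{27} + \frac{(n-1)^2}{9}.
\]
When $n \equiv 2 \pmod 3$, the parts have sizes $\tfrac{n+1}{3}, \tfrac{n+1}{3}, \tfrac{n+1}{3}-1$, giving
\[
k_3 = \Bigl(\tfrac{n+1}{3}\Bigr)^{\!2}\Bigl(\tfrac{n+1}{3}-1\Bigr) = \frac{(n+1)^3}{27} - \frac{(n+1)^2}{9}.
\]
These are exactly the three claimed formulas.

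There is no genuine obstacle here: the only nontrivial ingredient is the structural observation about triangles in complete multipartite graphs, and the rest is an arithmetic check in each residue class. The proof is essentially one line plus a table of the three cases.
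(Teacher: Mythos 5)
Your proof is correct, and since the paper states this as an Observation without proof, your argument (triangles in the complete $3$-partite graph are exactly the transversals of the three parts, so $k_3(T(n,3)) = n_1 n_2 n_3$, followed by the residue-class arithmetic) is precisely the computation the paper implicitly relies on. Nothing further is needed.
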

\begin{observation}\label{obs:kdiff}
  For all $n \geq 3$,
  \begin{align*}
	k_3(T(n,3)) - k_3(T(n-1,3)) &= \floor{\frac{\floor{2n/3}^2}{4}}\\
	&=\begin{cases}
          \dfrac{n^2}{9}, &\text{ if $n \equiv 0 \pmod 3$}\\
          \dfrac{(n-1)^2}{9}, &\text{ if $n \equiv 1 \pmod 3$}\\
          \dfrac{n^2-n-2}{9}, &\text{ if $n \equiv 2 \pmod 3$}
	\end{cases}\\
	&\geq \frac{(n-1)^2}{9}.   
  \end{align*}
\end{observation}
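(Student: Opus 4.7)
The plan is to proceed by direct computation, splitting into the three residue classes $n \bmod 3$. The key observation is that $T(n,3)$ is complete $3$-partite, so any triangle must take one vertex from each part, which means $k_3(T(n,3))$ equals the product of the three part sizes. Writing $n = 3m$, $3m+1$, or $3m+2$, the part sizes are $(m,m,m)$, $(m+1,m,m)$, and $(m+1,m+1,m)$ respectively, giving $k_3(T(n,3)) = m^3$, $m^2(m+1)$, $m(m+1)^2$ in each case. This already matches the previous observation, so that piece is just a consistency check.

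Next I would compute $k_3(T(n-1,3))$ by noting that decreasing $n$ by $1$ shifts the residue class: from $n = 3m$ to parts $(m,m,m-1)$, from $n = 3m+1$ to parts $(m,m,m)$, and from $n = 3m+2$ to parts $(m+1,m,m)$. Subtracting in each case gives $m^2$, $m^2$, and $m(m+1)$ respectively, which translate back to $n^2/9$, $(n-1)^2/9$, and $(n^2-n-2)/9$, exactly the three cases claimed.

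To verify the closed form $\bigl\lfloor \lfloor 2n/3 \rfloor^2/4 \bigr\rfloor$, I would compute $\lfloor 2n/3 \rfloor$ in each residue class: it is $2m$, $2m$, and $2m+1$. Then $(2m)^2/4 = m^2$ (integer) and $\lfloor (2m+1)^2/4 \rfloor = m^2 + m$, matching the three cases above.

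Finally, the inequality $k_3(T(n,3)) - k_3(T(n-1,3)) \geq (n-1)^2/9$ is immediate: for $n \equiv 0 \pmod 3$ we have $n^2 \geq (n-1)^2$; for $n \equiv 1 \pmod 3$ equality holds; and for $n \equiv 2 \pmod 3$ the claim reduces to $n^2 - n - 2 \geq (n-1)^2 = n^2 - 2n + 1$, i.e.\ $n \geq 3$, which is the hypothesis. There is no real obstacle here; the only thing to be careful about is the bookkeeping across residue classes and the conversion between $m$ and $n$.
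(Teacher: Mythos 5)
Your proposal is correct, and it is exactly the routine residue-class computation that the paper leaves implicit by stating this as an Observation (with the formula for $k_3(T(n,3))$ recorded in the preceding Observation). All case calculations, the floor identity, and the final inequality check out, so there is nothing to add.
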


\begin{theorem}
\label{thm:upper_bound}
  For any graph $G$, $\ttG \leq k_3(T(n,3))$. If equality holds,
  then $G \iso T(n,3)$.
\end{theorem}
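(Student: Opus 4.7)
The plan is to prove $\ttG \leq k_3(T(n,3))$ by induction on $n$, with trivial base cases for small $n$. For the inductive step I would pick a vertex $v$ of minimum degree in $G$, set $d = \deg(v) = \delta(G)$ and $H = G[N(v)]$, and use that every triangle of $G$ either avoids $v$ or consists of $v$ together with an edge of $H$. Extending each clique of a minimum edge clique cover of $H$ by adjoining $v$, and combining with a minimum triangle clique cover of $G - v$, yields the key inequality
\[
\ttG \leq \ttr(G - v) + \te(H).
\]
By the inductive hypothesis $\ttr(G - v) \leq k_3(T(n-1,3))$, so in view of Observation~\ref{obs:kdiff} it suffices to show $\te(H) \leq \floor{\floor{2n/3}^2/4}$.

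I would split on how $d$ compares with $\delta(T(n,3)) = \floor{2n/3}$. When $d \leq \floor{2n/3}$ (Case A), Theorem~\ref{thm:EGP} applied to the $d$-vertex graph $H$ gives $\te(H) \leq \floor{d^2/4} \leq \floor{\floor{2n/3}^2/4}$ immediately. When $d \geq \floor{2n/3} + 1$ (Case B), the extra input is structural: each $u \in N(v)$ has at most $n - 1 - d$ neighbors outside $N(v) \cup \{v\}$, so $\delta(H) \geq 2d - n$, which is strictly larger than $d/2$ in this range. This is precisely the regime in which Lemma~\ref{lem:plushalf_plusone}---and for larger $d$, Theorem~\ref{thm:lovasz} applied to the now-dense $H$---sharpens the \erdos--Goodman--\posa bound enough to recover $\te(H) \leq \floor{\floor{2n/3}^2/4}$.

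For the equality case, assume $\ttG = k_3(T(n,3))$. Both inequalities above must be tight, so in particular $\ttr(G - v) = k_3(T(n-1,3))$ and by induction $G - v \iso T(n-1,3)$. In Case B I would derive a contradiction from a degree count: the smallest Tur\'an part of $T(n-1,3)$ contains vertices of low degree in $G-v$, and even joining them all to $v$ fails to raise every vertex's degree to $\floor{2n/3} + 1$. Hence equality occurs only in Case A, and then tightness in Theorem~\ref{thm:EGP} also gives $H \iso K_{\floor{d/2},\ceil{d/2}}$. Since the induced subgraph of $T(n-1,3)$ on $N(v)$ is complete tripartite, being complete bipartite forces one Tur\'an part $P_i$ of $G - v$ to lie entirely outside $N(v)$; then $P_i$ is exactly the non-neighbor set of $v$ in $G - v$, and adjoining $v$ to $P_i$ produces the third part of a $T(n,3)$-partition of $G$, so $G \iso T(n,3)$.

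The main obstacle I anticipate is Case B: verifying that Lemma~\ref{lem:plushalf_plusone} together with Theorem~\ref{thm:lovasz} (and possibly a short additional argument at borderline values of $d$) is strong enough to bound $\te(H)$ uniformly over all $d > \floor{2n/3}$, and then carrying out the degree-counting obstruction cleanly across the three residue classes of $n$ modulo $3$ (with a handful of small-$n$ base cases checked by hand, e.g.\ verifying $\ttr(K_4) = 1 < 2 = k_3(T(4,3))$). A secondary, mostly bookkeeping, task is confirming in the Case A equality analysis that the part sizes of $T(n-1,3)$ are consistent with $|N(v)| = d$ and $|V(G-v) \setminus N(v)| = n - 1 - d$ in each residue class.
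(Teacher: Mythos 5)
Your skeleton is the same as the paper's: induction on $n$, deletion of a minimum-degree vertex $v$, the inequality $\ttG \leq \ttr(G-v) + \te(H)$ with $H = G[N(v)]$, the observation $\delta(H) \geq 2d-n$, a case split at $d = \floor{2n/3}$, and reliance on Lemma~\ref{lem:plushalf_plusone} at the borderline degrees. You deviate in two places. First, for $d$ above the threshold you invoke Theorem~\ref{thm:lovasz} where the paper uses its Lemma~\ref{lem:basicdelta}; this does go through (the non-edge count $k \leq d(n-d-1)/2$ plus $t \approx \sqrt{k}$ stays below the target $k_3(T(n,3))-k_3(T(n-1,3)) \geq (n-1)^2/9$ once $d \geq \floor{2n/3}+2$, and also at $d = \floor{2n/3}+1$ whenever $\delta(H) \geq 2d-n+1$), but be aware that Lov\'asz's bound alone is \emph{not} sufficient at $d = \floor{2n/3}+1$ with $\delta(H) = 2d-n$ --- that is exactly why the paper proves Lemma~\ref{lem:plushalf_plusone} --- and that Lemma~\ref{lem:plushalf_plusone} as stated applies only at the exact values $\delta = (n+1)/2$ and $\delta = n/2+1$, so the ``short additional argument at borderline values'' you anticipate is genuinely needed (or simply use Lemma~\ref{lem:basicdelta} there, as the paper does).

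Second, and more importantly, your equality analysis in Case B replaces the paper's most delicate step (its Case 3a) by a degree count, and your sketch of that count is wrong as stated. In $T(n-1,3)$ a vertex in a part of size $s$ has degree $(n-1)-s$, so the low-degree vertices sit in the \emph{largest} part, not the smallest; and in the critical residue $n \equiv 1 \pmod 3$ with $d = (2n+1)/3$ --- precisely the case where $\te(H) \leq (n-1)^2/9$ is tight and equality cannot be excluded numerically --- the graph $T(n-1,3)$ is $\frac{2n-2}{3}$-regular, so joining a vertex to $v$ \emph{does} raise its degree to $\floor{2n/3}+1$, and your stated contradiction evaporates. The argument can be repaired: to get every degree up to $d$ one must join every vertex of $G-v$ to $v$, forcing $d(v) = n-1$, contradicting $d(v) = \delta(G) = (2n+1)/3 < n-1$ for $n \geq 7$ (and $n=4$ is checked directly, since then $G = K_4$ and $\ttr(K_4) = 1 < 2$); for $n \equiv 0,2 \pmod 3$ the count works as you intend. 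With that repair your route is sound and arguably simpler than the paper's, which instead rules out equality by noting that $G-v \iso T(n-1,3)$ is $K_4$-free, so its minimum cover consists of triangles, and since $\delta(H) > \sizeof{V(H)}/2$ every edge of $H$ lies in a triangle of $H$; replacing each such triangle $T$ in the cover by $T \cup \{v\}$ re-covers all of $G$ with only $k_3(T(n-1,3))$ cliques. Your Case A analysis and its equality bookkeeping match the paper's.
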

\begin{proof}
  We use induction on $n$, with trivial base case when $n \leq 3$.
  Assume that $n > 3$ and the claim holds for smaller graphs. Let $v$
  be a vertex of minimum degree in $G$, let $G' = G-v$, and let $\cee'$
  be a smallest $K_3$ clique cover of $G'$. By the induction hypothesis,
  $\sizeof{\cee'} \leq k_3(T(n-1,3))$. We will extend $\cee'$ to a $K_3$
  clique cover $\cee$ of $G$ such that $\sizeof{\cee} \leq k_3(T(n,3))$.
  The only triangles of $G$ not yet covered by $\cee'$ are the triangles
  that contain $v$.

  Let $H$ be the subgraph of $G$ induced by $N(v)$, and
  let $\fee$ be a smallest \emph{edge} clique cover of $H$. By adding
  $v$ to each clique in $\fee$, we obtain a set of cliques $\fee_1$
  covering every triangle that contains $v$.  Thus,
  $\fee_1 \cup \cee'$ is a triangle edge cover in $G$. It therefore
  suffices to show that $\teH \leq k_3(T(n,3)) - k_3(T(n-1,3))$, and
  this is what we show next. We split the proof into cases according to $d(v)$.

  \caze{1}{$d(v) \leq \floor{2n/3}$.} In this case,
  $\sizeof{V(H)} \leq 2n/3$, so by Theorem~\ref{thm:EGP} and
  Observation~\ref{obs:kdiff}, we have
  \[ \te(H) \leq \floor{\sizeof{V(H)}^2/4} \leq
    \floor{\floor{2n/3}^2/4} = k_3(T(n,3)) - k_3(T(n-1,3)), \] as
  desired. If $\ttG = k_3(T(n,3))$, then equality must hold throughout
  the above inequality, and in particular we must have
  $\te(H) = \sizeof{V(H)}^2/4 = \floor{\floor{2n/3}^2/4}$. By
  Theorem~\ref{thm:EGP}, this implies that
  $H \iso T(\floor{2n/3}, 2)$. Furthermore, $\ttG = k_3(T(n,3))$
  requires that $\sizeof{\cee'} = \ttee(G-v) = k_3(T(n-1,3))$, so by the
  induction hypothesis, we have $G' \iso T(n-1,3)$. Thus, $G$ is
  obtained from $T(n-1,3)$ by adding a new vertex adjacent to
  $\floor{2n/3}$ vertices inducing a complete bipartite graph. This
  implies that $G \iso T(n,3)$.

Since $v$ was a vertex of minimum degree, every $w \in N(v)$ satisfies $d(w) \geq d(v)$. At most $n-d(v)$ of those neighbors lie outside $N(v)$, so for all $w \in V(H)$, we have
  \[ d_H(w) \geq d(v) - (n-d(v)) = 2d(v)-n. \]
  Thus, $\delta(H) \geq 2d(v)-n$. This inequality will be used in the subsequent cases.
   
  \caze{2}{$d(v) \geq 2n/3 + 1$.} 
	Lemma~\ref{lem:basicdelta} yields
  \[ 
	\te(H) \leq 
	(d(v) - \delta(H)) + \frac{d(v)(d(v) -
    \delta(H)-1)}{2} \leq \frac{n(d(v)+2) - d(v)(d(v)+3)}{2}. 
	\]
  If $d(v) \geq (2n+4)/3$, then this implies that 
  \[
	\te(H) \leq \frac{n\Big(\frac{2n+4}{3}+2\Big) - \frac{2n+4}{3}\Big(\frac{2n+4}{3}+3\Big)}{2} = \frac{(n-1)^2-27}{9}
	< \frac{(n-1)^2}{9} \leq k_3(T(n,3)) - k_3(T(n-1,3)),
	\]
	and we are done. Similarly, if $d(v) = (2n+3)/3$, then $n \equiv 0 \pmod{3}$, so we again
  have 
\[
\te(H) \leq \frac{n\Big(\frac{2n+3}{3}+2\Big) - \frac{2n+3}{3}\Big(\frac{2n+3}{3}+3\Big)}{2} = \frac{2n^2-3n-36}{18}
< \frac{n^2}{9} = k_3(T(n,3)) - k_3(T(n-1,3)).
\]
  \caze{3}{$d(v) = (2n+1)/3$ or $d(v) = (2n+2)/3$.} 
  If $\delta(H) \geq 2d(v) - n + 1$, then
  Lemma~\ref{lem:basicdelta} yields
  \[ \te(H) \leq (d(v) - \delta(H)) + \frac{d(v)(d(v) -
      \delta(H)-1)}{2} = n-1 + \frac{d(v)(n - d(v) - 4)}{2} < \frac{(n-1)^2}{9}, \]
  where the last inequality follows from the assumption that $d(v) \geq (2n+1)/3$.

  Hence we may assume that $\delta(H) = 2d(v)-n$. We consider two subcases:
  either $d(v) = (2n+1)/3$ or $d(v) = (2n+2)/3$.

  \caze{3a}{$d(v) = (2n+1)/3$.} Here $\delta(H) = 2d(v) - n$ gives
  $\delta(H) = (n+2)/3 = (d(v)+1)/2$. Hence, Lemma~\ref{lem:plushalf_plusone}
  yields
  \[ \te(H) \leq \frac{d(v)^2}{4} - \frac{d(v)}{2} + \frac{1}{4} = \frac{(n-1)^2}{9}, \]
  and so $\te(H) \leq k_3(T(n,3)) - k_3(T(n-1,3))$.

  This yields $\ttG \leq k_3(T(n,3))$ for the case $d(v) = (2n+1)/3$.
  To obtain the strict inequality $\ttG < k_3(T(n,3))$, suppose to the
  contrary that $\ttG = k_3(T(n,3))$. Since
  $\te(H) \leq k_3(T(n,3)) - k_3(T(n-1,3))$, we must also have
  $\ttee(G-v) = k_3(T(n-1,3))$. By the induction hypothesis,
  $G-v \iso T(n-1,3)$. In particular, $G-v$ is $K_4$-free. Let $\cee'$
  be a smallest $K_3$ clique cover in $G-v$. Since $G-v$ is
  $K_4$-free, every clique in $\cee'$ is a triangle.

  Since $\delta(H) = \frac{d(v)+1}{2} > \frac{d(v)}{2} = \sizeof{V(H)}/2$, every edge in $H$ is contained in some
  triangle of $H$. Since $G-v$ is $K_4$-free, every triangle of $H$ is contained
  in $\cee'$. Let $\cee$ be the collection of cliques obtained by replacing
  every triangle $T$ of $H$ contained in $\cee'$ with the clique $T \cup \{v\}$.
  Now $\cee$ covers all triangles in $G$, and
  \[ \sizeof{\cee} = \sizeof{\cee'} \leq k_3(T(n-1,3)) < k_3(T(n,3)). \]
  This contradicts the hypothesis that $\ttG = k_3(T(n,3))$. We conclude
  that $\ttG < k_3(T(n,3))$ when $d(v) = (2n+1)/3$.

  \caze{3b}{$d(v) = (2n+2)/3$.} Here $\delta(H) = 2d(v) - n$ gives $\delta(H) = \frac{n+4}{3} = \frac{d(v)}{2} + 1$.
  In this case, $n \equiv 2 \pmod{3}$. Since we have assumed also that $n > 3$, we have $n \geq 5$, so
  Lemma~\ref{lem:plushalf_plusone} yields
  \[ 
    \te(H) \leq \frac{d(v)^2}{4} - d(v) + 2 = \frac{n^2}{9} - \frac{4n}{9} + \frac{13}{9} \leq \frac{n^2 - n - 2}{9},
  \]
  where the inequality is strict for $n > 5$.  Thus,
  $\te(H) \leq k_3(T(n,3)) - k_3(T(n-1,3))$, and this inequality is
  strict for $n > 5$. On the other hand, when $n=5$, we have
  $\delta(G) = d(v) = 4$, so that $G$ is a complete graph, which
  forces $\ttG = 1 < k_3(T(5;3))$. Thus, when $d(v) = (2n+2)/3$, we
  have $\ttG < k_3(T(n,3))$.
	
  Thus, in all cases, $\ttG \leq k_3(T(n,3))$, and equality holds only
  in Case~1 when $G \iso T(n,3)$.
\end{proof}

Considering the way that Lemma~\ref{lem:basicdelta} is used in
  the proof of Theorem~\ref{thm:upper_bound}, one might hope to prove
  an analogous ``minimum-degree version'' of
  Theorem~\ref{thm:upper_bound} and then use it to prove the $t=4$
  case of Conjecture~\ref{conjecture}. Unfortunately, one runs
  into difficulties with this approach very quickly. The main
  difficulty is that the proof of Lemma~\ref{lem:basicdelta} relies
  very strongly on there only being two types of edges that must be
  covered: edges within a single $A_i$ and edges with one endpoint in
  $A_i$ and the other in $A_j$. When $t=3$, on the other hand, there
  are (at least) three possible types of $K_3$: those contained within
  a single $A_i$, those with two endpoints in one $A_i$ and the other
  in $A_j$, and those with endpoints in three different sets
  $A_i, A_j, A_k$. This makes it considerably more difficult to find a
  way to cover all copies of $K_3$ and efficiently count the number of
  cliques used in the process. As $t$ grows larger, even more
  configurations are possible, making this approach more difficult
  than anticipated.

The results stated in Theorem~\ref{thm:upper_bound} have a very close connection with their counterparts established for hypergraphs~\cite{Bollobas1977, Lehel1982}. In the following we discuss the similarity and the differences between our results and those known in the hypergraph literature. A $t$-uniform hypergraph $H = (V(H),E(H))$ consists of a vertex set $V(H)$ and a hyperedge set $E(H)$, where each hyperedge is a set of some $t$ vertices. A $2$-uniform hypergraph is
simply a graph. For $p \geq t \geq 2$, let $K_p^{(t)}$ denote the 
hyperclique on $p$ vertices, i.e., a set of $p$ vertices of
a hypergraph where every subset of $t$ vertices forms a hyperedge.
Note that the usual clique $K_t$ is the same as $K_t^{(2)}$. 
Let $h_t(n,p)$ denote the maximum number of \emph{hyperedges} that a $K_p^{(t)}$-free $t$-uniform
hypergraph on $n$ vertices can have. 
Let $k_t(n,p)$ denote the maximum number of $K_t$ in a $K_p$-free graph. 
It was proved by Moon and Moser~\cite{MoonMoser1962}, and by 
Sauer~\cite{Sauer1968} 
that $k_t(n,p)$ is precisely the number of $K_t$ in the \turan~graph $T(n,p-1)$.
In other words, $k_t(n,p) = k_t(T(n,p-1))$. 
The following result was conjectured by
\bollobas~\cite{Bollobas1977} and proved by Lehel~\cite{Lehel1982}.

\begin{theorem}[Lehel~\cite{Lehel1982}]
\label{thm:Lehel}
The edges of every $t$-uniform hypergraph of order $n$ can be covered by at most $h_t(n,p)$ edges and copies of $K_p^{(t)}$.
\end{theorem}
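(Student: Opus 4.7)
The plan is to proceed by double induction, first on $n$ and then on $t$, mirroring the strategy of the proof of Theorem~\ref{thm:upper_bound}. For the base case $t=2$, the statement reduces to: every $n$-vertex graph admits an edge cover by at most $h_2(n,p)$ items, each being either an edge or a copy of $K_p$. For $p=3$ this is exactly Theorem~\ref{thm:EGP}; for general $p$ it is a Tur\'an-flavored refinement that must be established separately by an \erdos--Goodman--\posa\ style maximum-clique extraction argument, likely combined with minimum-degree ideas in the spirit of Section~\ref{sec:mindeg-lovasz}.

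For the inductive step with $t \geq 3$, I would pick a vertex $v \in V(H)$ and cover $H - v$ by the induction hypothesis on $n$, using at most $h_t(n-1, p)$ items. To handle the hyperedges through $v$, consider the link hypergraph $L_v$ on $V(H) \setminus \{v\}$: this is $(t-1)$-uniform, and its edges are exactly the $(t-1)$-sets $e$ with $e \cup \{v\} \in E(H)$. A copy of $K_{p-1}^{(t-1)}$ in $L_v$ adjoined with $v$ is a copy of $K_p^{(t)}$ in $H$, so by the induction hypothesis on $t$, $L_v$ admits a cover by at most $h_{t-1}(n-1, p-1)$ items; appending $v$ to each item produces a cover of every hyperedge of $H$ through $v$ using hyperedges and copies of $K_p^{(t)}$.

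Combining the two pieces gives a cover of $H$ of size at most $h_t(n-1, p) + h_{t-1}(n-1, p-1)$, so it remains to verify the Tur\'an-number inequality
\[ h_t(n-1, p) + h_{t-1}(n-1, p-1) \leq h_t(n, p). \]
This can be obtained by an explicit construction: take an extremal $(n-1)$-vertex $K_p^{(t)}$-free hypergraph $F$ and adjoin a new vertex $v$ whose link is an extremal $(n-1)$-vertex $K_{p-1}^{(t-1)}$-free $(t-1)$-uniform hypergraph. The enlarged hypergraph is $K_p^{(t)}$-free, since any putative $K_p^{(t)}$ must pass through $v$ (otherwise it would lie in $F$), but then the remaining $p-1$ vertices would span a $K_{p-1}^{(t-1)}$ in the link, contradicting its extremality; and it has exactly $h_t(n-1, p) + h_{t-1}(n-1, p-1)$ edges, yielding the desired lower bound on $h_t(n,p)$.

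The main obstacle is the base case $t=2$ for arbitrary $p$, which does not follow from Theorem~\ref{thm:EGP} and is itself a substantial \erdos--Goodman--\posa\ type refinement (the statement originally posed by \bollobas). A secondary difficulty is that the above estimate is somewhat crude: to recover tightness or characterize the extremal hypergraphs, one would likely need to choose $v$ of minimum degree (as in the proof of Theorem~\ref{thm:upper_bound}) and supplement the link-cover argument with a minimum-degree analog of Lehel's theorem, in the spirit of Lemma~\ref{lem:plushalf_plusone}.
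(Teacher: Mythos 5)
First, a point of order: the paper does not prove Theorem~\ref{thm:Lehel} at all; it is quoted from Lehel~\cite{Lehel1982} (resolving a conjecture of \bollobas~\cite{Bollobas1977}), so there is no in-paper proof to compare against, and any proposal must stand on its own as a proof of Lehel's theorem.

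Your argument has a genuine gap at its central step: the claim that a copy of $K_{p-1}^{(t-1)}$ in the link $L_v$, adjoined with $v$, is a copy of $K_p^{(t)}$ in $H$. If $S$ spans a $K_{p-1}^{(t-1)}$ in $L_v$, this only certifies that every $t$-set of the form $\{v\} \cup e$ with $e \subset S$, $\sizeof{e}=t-1$, is a hyperedge of $H$; the $t$-subsets of $S$ itself need not be hyperedges, so $\{v\} \cup S$ need not be a hyperclique of $H$ and is not a legitimate covering object (the covering sets in Theorem~\ref{thm:Lehel} must be hyperedges or actual sub-hypercliques of $H$, exactly as cliques must be cliques of $G$ in Theorem~\ref{thm:EGP}). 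The failure is visible already for $t=2$: there the link of $v$ is just the vertex set $N(v)$ (a $1$-uniform hypergraph), a ``$K_{p-1}^{(1)}$'' is any $(p-1)$-subset of $N(v)$, and adjoining $v$ gives a star, not a $K_p$, unless those neighbors happen to be mutually adjacent. The correct analogue of the vertex-removal step (as in the proof of Theorem~\ref{thm:upper_bound}) covers the \emph{induced} structure on $N(v)$, which retains the internal adjacency information that the link forgets; but then the objects you need for the link are $(p-1)$-sets that are simultaneously hypercliques of $H$, which is a strictly stronger requirement than what your induction hypothesis on $(t-1)$-uniform hypergraphs provides, so the recursion as set up is not self-contained. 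Two further remarks: your base case ($t=2$, general $p$, i.e.\ covering the edges of any graph by at most $\mathrm{ex}(n,K_p)$ edges and copies of $K_p$) is explicitly left unproven and is itself a substantial theorem (due to \bollobas~\cite{Bollobas1977}), so the induction has no floor; and the paper's own discussion of Lehel's Lemma~4.3 (the ``$K_p$-free half of the edges'' lemma) indicates that Lehel's actual proof follows a quite different, non-EGP-style strategy. The Tur\'an-number inequality $h_t(n-1,p)+h_{t-1}(n-1,p-1) \leq h_t(n,p)$ and its construction are fine, but they do not rescue the covering step.
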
 

When $t = 2$ and $p = 3$, we have $h_2(n,3) = k_2(n,3) = k_2(T(n,2)) = \left\lfloor \frac{n^2}{4}\right\rfloor$ (by Mantel's theorem and also by \turan~\cite{Turan1941}), and hence Theorem~\ref{thm:Lehel} reduces to the classical result on edge clique cover by \erdos, Goodman, and \posa~\cite{ErdosGoodmanPosa1966} mentioned in the introduction, which states that the edges of every graph on $n$ vertices can be covered by using at most $\left\lfloor \frac{n^2}{4}\right\rfloor$ edges and triangles.  

When $t = 3$, Theorem~\ref{thm:Lehel} states that 
one can use at most $h_3(n,4)$ hyperedges and $K_4^{(3)}$'s to cover all hyperedges in a $3$-uniform hypergraph. However, Theorem~\ref{thm:upper_bound} does not follow from this statement. Indeed, our theorem establishes that one can use at most $k_3(n,4)$ cliques to cover all triangles in any graph on $n$ vertices. We emphasize that $h_3(n,4)$ is strictly larger than $k_3(n,4)$. Recall that $k_3(n,4)$, which is the number of triangles in the \turan~graph 
$T(n,3)$, can be computed explicitly as $\lfloor n/3 \rfloor \lfloor (n+1)/3 \rfloor \lfloor (n+2)/3 \rfloor \approx n^3/27$.  
By contrast, the determination of $h_3(n,4)$, even asymptotically, has remained open since the original work of \turan~\cite{Turan1941}. Moreover, \turan~established that (see also~\cite{ChungLu1999})
\begin{equation}
\label{eq:t34}
h_3(n,4) \geq 
\begin{cases}
m^2(5m-3)/2, &\text{ if } n = 3m,
\vspace{0.1in} \\
m(5m^2+2m-1)/2, &\text{ if } n = 3m+1,
\vspace{0.1in} \\
m(m+1)(5m_2)/2, &\text{ if } n = 3m + 2. 
\end{cases}
\end{equation}
The hypergraph that gives rise to this lower bound is a modification of the usual \turan~graph $T(n, 3)$ to the hypergraph setting, which can be constructed as follows.
The vertex set is partitioned into three (almost) equal sets $V_1$, $V_2$, and $V_3$,
where $|V_1| = \lfloor n/3 \rfloor$, $|V_2| = \lfloor (n+1)/3 \rfloor$, and $|V_3| = \lfloor (n+2)/3 \rfloor$. The hyperedge set consists of the $3$-tuples $e = \{u,v,w\}$, where either $u \in V_1, v \in V_2, w \in V_3$, or $u, v \in V_i$ and $w \in V_{(i+1) \pmod 3}$. Notice that the first type of hyperedges corresponds to the triangles
of the \turan~graph $T(n,3)$, while the second type of hyperedges corresponds to new triangles that are unique to the hypergraph context. From \eqref{eq:t34}, $h_3(n,4)$ is in order of
$\frac{5}{2}\frac{n^3}{27}$, which is strictly larger than $k_3(n,4) \approx \frac{n^3}{27}$. In fact, \turan~conjectured that \eqref{eq:t34} is actually an equality, which was known to be true for all $n \leq 13$~\cite{Spencer1990}.  

The key point that leads to the difference between $h_3(n,4)$ and
$k_3(n,4)$ is that while a $K_4$-free graph (removing all edges that
do not belong to any triangles) can be considered as a $K_4^{(3)}$-free
$3$-uniform hypergraph, the converse is \emph{not} true. For example,
a hypergraph with the vertex set $\{u,v,w,x\}$ and the hyperedge set
$\{\{u,x,w\}, \{v,x,w\}$, $\{u,v,w\}\}$ is a $K_4^{(3)}$-free
hypergraph, but it corresponds exactly to a $K_4$ as a
graph. Therefore, counting edges in a $K_4^{(3)}$-free $3$-uniform
hypergraph is not the same as counting triangles in a $K_4$-free
graph. In fact, the maximum number of hyperedges in such
$K_4^{(3)}$-free hypergraphs is larger than the maximum number of
triangles in $K_4$-free graphs. That is the reason Theorem~\ref{thm:Lehel}
produces a strictly worse upper bound than the tight upper bound we
obtain in Theorem~\ref{thm:upper_bound}.  This is also evident from
the fact that while according to Theorem~\ref{thm:Lehel}, the
hyperedges of every $3$-uniform hypergraph can be covered by using at
most $h_3(n,4)$ $K_4^{(3)}$'s and hyperedges,
Remark~\ref{rm:clique_size} states that we cannot cover all triangles
in $K_n$ $(n \geq 18)$ by $k_3(n,4) = k_3(T(n,3))$ $K_4$'s and
triangles.

\begin{remark}
\label{rm:clique_size}
It is impossible to cover all triangles in $K_n$ $(n \geq 18)$ by only
$k_3(n,4)$ triangles and copies of $K_4$.  Indeed, since
$k_3(n,4) \approx \frac{n^3}{27}$ and each $K_4$ contains four
triangles, $k_3(n,4)$ triangles and copies of $K_4$ can cover at most
$\approx \frac{4}{27}n^3 < \frac{n^3}{6} \approx \binom{n}{3}$
triangles, which is the number of triangles in $K_n$, for $n$
sufficiently large. It can be easily verified that this conclusion
holds for $n \geq 18$. More generally, as $k_t(n,t+1)\approx \frac{n^t}{t^t}$ and $(t+1)\frac{n^t}{t^t} < \frac{n^t}{t!} \approx \binom{n}{t}$ for any fixed $t \geq 3$ and sufficiently large $n$, we cannot cover all $K_t$'s in $K_n$ by only $k_t(n,t+1)$ $K_t$'s and copies of $K_{t+1}$.  
\end{remark}

Note also that the key lemma (Lemma 4.3) in the proof of
Lehel~\cite{Lehel1982} fails if we try to adapt it to the setting of
graphs and triangles. The lemma states that for any $2 \leq t < p$,
every $t$-uniform hypergraph containing $m$ edges has a $K_p$-free
edge subset of cardinality at least $m/2$. However, its
graph-and-triangle version, which would state that every graph
containing $m$ triangles has a $K_4$-free subset of triangles of
cardinality at least $m/2$, is no longer correct. A counterexample is
the graph $K_5$, which contains exactly ten triangles, where we cannot
find any subset of five triangles that does not include a
$K_4$. Indeed, as established by Moon and Moser~\cite{MoonMoser1962}
and Sauer~\cite{Sauer1968}, the \turan~graph $T(5,3)$ is the
$K_4$-free graph that contains the largest number of triangles, which
is only four. Thus, a direct adaptation of Lehel's arguments to our
setting does not imply our result.

\section{Algorithmic Considerations}
Scheinerman and Trenk~\cite{scheinerman-trenk} developed an algorithm
which computes the edge clique cover number of a chordal graph $G$.
Our primary goal in this section is to generalize their algorithm to
the context of $K_t$ clique covers; however, we will also generalize
the algorithm in two other respects. These generalizations may be of
interest even for the original edge clique cover problem.

Our first generalization is to consider a weighted version of the edge
clique cover problem in which each $t$-clique $k$ has an integer
weight $w(k)$ specifying the number of times the clique needs to be
covered. While we are primarily concerned with the unweighted version
of the problem (equivalently, the case where all $t$-cliques have
weight $1$), the most natural recursive formulation of even the
unweighted version of the algorithm involves passing to subproblems in
which some $t$-cliques no longer need to be covered, which is
equivalent to giving those cliques weight $0$. (In fact, this
distinction already appears in the original formulation of Scheinerman
and Trenk~\cite{scheinerman-trenk}, in which edges are labeled as ``covered''
as the algorithm proceeds.) Since weighted subproblems
arise naturally even when solving the unweighted problem, we formulate
the algorithm for the weighted problem from the outset.

Our second generalization is to observe that the Scheinerman--Trenk
algorithm works on a slightly more general class of graphs than the
chordal graphs, which we dub \emph{semichordal graphs}, and so in the
interest of generality we state the $K_t$ clique cover version of the
algorithm in terms of semichordal graphs rather than chordal graphs.

The remainder of this section consists of three subsections.
In the first subsection, we define the class of semichordal graphs
and discuss some of their properties. In the second subsection, we give
our generalization of the Scheinerman--Trenk algorithm and prove its
correctness. In the third subsection, we prove that the $K_t$ clique
cover problem on general graphs is NP-hard, justifying the development
of algorithms on specialized graph classes.

\subsection{A Superclass of Chordal Graphs}\label{sec:superclass}
A graph is said to be \emph{chordal} if it has no induced cycle of
length greater than $3$. A well-known characterization of chordal
graphs, due to Dirac~\cite{dirac}, is that they are the graphs which
admit a \emph{simplicial elimination ordering}: an ordering $v_1,
\ldots, v_n$ of the vertices of $G$ such that $N(v_i) \cap \{v_{i+1},
\ldots, v_n\}$ is a clique for each $i$.

The notion of a perfect elimination order admits a natural generalization,
as follows.
\begin{definition}[Aboulker--Charbit--Trotignon--Vu\v skovi\'c \cite{ACTV}]
  Let $\fee$ be a set of graphs. An \emph{$\fee$-elimination ordering}
  of a graph $G$ is an ordering $v_1, \ldots, v_n$ of the vertices of
  $G$ such that for each $i$, the induced subgraph $G[N(v_i) \cap
  \{v_{i+1}, \ldots, v_n\}]$ has no induced subgraph isomorphic to a
  graph in $\fee$.
\end{definition}
Thus, Dirac's result states that the chordal graphs are precisely the
graphs that admit a $\{\overline{K_2}\}$-elimination ordering. In Section~\ref{sec:weightkt},
we give an algorithm for computing weighted $K_t$ clique covers on a superclass of the
chordal graphs, defined as follows.
\begin{definition}
  A graph $G$ is \emph{semichordal} if it admits a
  $\{P_3\}$-elimination ordering, where $P_3$ is the path on three
  vertices.
\end{definition}
Equivalently, a graph is semichordal if it admits a vertex ordering
$v_1, \ldots, v_n$ such that for each $i$, the subgraph induced by
$N(v_i) \cap \{v_{i+1}, \ldots, v_n\}$ is a disjoint union of complete
graphs.  Since $\overline{K_2}$ is an induced subgraph of $P_3$,
Dirac's characterization immediately implies that every chordal graph
is semichordal. On the other hand, any cycle $C_n$ for $n > 3$ is a
semichordal graph that is not chordal.

As semichordal graphs are defined in terms of the existence of a
certain elimination ordering, it would be desirable to have a
characterization of these graphs in terms of their forbidden induced
subgraphs, analogous to the definition of chordal
graphs. Unfortunately, we are aware of no such characterization. The
following sufficient (but not necessary) condition for a graph to be
semichordal was discovered by Aboulker, Charbit, Trotignon, and Vu\v
skovi\'c~\cite{ACTV}.
\begin{definition}[\cite{ACTV}]
  A graph $G$ is a \emph{wheel} if there is a vertex $v$ of degree
  at least $3$ such that $G-v$ is isomorphic to a cycle. The vertex
  $v$ is the \emph{center} of the wheel and the subgraph $G-v$ is the
  \emph{rim} of the wheel. A wheel is a \emph{$3$-wheel} if there are
  three consecutive vertices $x,y,z$ on the rim such that the center
  is adjacent to $x$, $y$, and $z$.
\end{definition}
\begin{theorem}[\cite{ACTV}]
  If $G$ has no induced subgraph isomorphic to a $3$-wheel, then $G$
  is semichordal.
\end{theorem}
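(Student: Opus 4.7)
The plan is to induct on $n = |V(G)|$, with trivial base cases. Since $3$-wheel-freeness is hereditary and a $\{P_3\}$-elimination ordering of $G - v$ extends to one of $G$ whenever $G[N_G(v)]$ induces a disjoint union of complete graphs, the inductive step reduces to the following key claim: every $3$-wheel-free graph $G$ on at least two vertices has a vertex $v$ such that $G[N_G(v)]$ contains no induced $P_3$ --- I will call such a $v$ \emph{$P_3$-simplicial}. Given such a $v$, prepending it to the $\{P_3\}$-elimination ordering of $G - v$ furnished by the induction hypothesis finishes the step.

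To establish the key claim, I would argue by contradiction: assume no vertex of $G$ is $P_3$-simplicial, so every vertex $v$ admits three neighbors $x, y, z$ with $xy, yz \in E(G)$ and $xz \notin E(G)$. The core idea is to amplify any such induced $P_3$ into an induced $3$-wheel. Consider the induced subgraph $H := G[\{x, z\} \cup (V(G) \setminus (N_G(y) \cup \{y, v\}))]$. If $x$ and $z$ lie in a common component of $H$, then a shortest $xz$-path $P$ in $H$ is induced in $G$, and its internal vertices are non-neighbors of $y$ distinct from $v$ by construction; hence $V(P) \cup \{y\}$ induces in $G$ a cycle $C$ of length at least $4$ (since $xz \notin E$) passing through the three consecutive vertices $x, y, z$. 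Because $v$ is adjacent to all of $x, y, z$, the induced subgraph $G[V(C) \cup \{v\}]$ is a $3$-wheel with center $v$ and rim $C$, contradicting the hypothesis on $G$.

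I may therefore assume that, for every quadruple $(v, x, y, z)$ realizing an induced $P_3$ in some $N_G(v)$, the set $S(v, x, y, z) := (N_G(y) \cup \{y, v\}) \setminus \{x, z\}$ is an $xz$-separator in $G$. To close the argument, the plan is to pick such a quadruple minimizing $|C_x|$, where $C_x$ denotes the component of $x$ in $G - S$, and to apply the induction hypothesis to the proper induced subgraph $G[C_x]$ (which is $3$-wheel-free and hence semichordal) in order to locate a $P_3$-simplicial vertex $x'$ of $G[C_x]$. All $G$-neighbors of $x'$ outside $C_x$ must lie in $S \subseteq N_G(y) \cup \{y, v\}$, so the only possible obstructions to $x'$ being $P_3$-simplicial in $G$ itself are induced $P_3$'s in $G[N_G(x')]$ built from $y$, $v$, and common neighbors of $y$. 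The main technical hurdle will be a case analysis showing that every such obstruction either directly produces an induced $3$-wheel via a new induced cycle through $y$ whose other vertices lie inside $C_x$, or else furnishes a new quadruple with strictly smaller $|C_x|$, contradicting the minimal choice. Controlling the interaction between $N_G(x')$ and the separator $S$ --- particularly when $x'$ has several neighbors in $N_G(y)$ with differing adjacencies to $v$ --- is the most delicate step.
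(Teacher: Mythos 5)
This statement is quoted from Aboulker--Charbit--Trotignon--Vu\v skovi\'c and is not proved in the paper, so I am judging your argument on its own terms. The first two-thirds of your proposal are sound: reducing the theorem to the claim that every $3$-wheel-free graph has a vertex $v$ with $G[N(v)]$ $P_3$-free is the right reduction, and your amplification step is correct --- if $x$ and $z$ lie in one component of $G-S$ with $S=(N(y)\cup\{y,v\})\setminus\{x,z\}$, a shortest $xz$-path there plus $y$ is an induced cycle of length at least $4$ through the consecutive vertices $x,y,z$, and adding $v$ gives an induced $3$-wheel. So in a $3$-wheel-free graph every such quadruple does yield an $xz$-separator of that form.

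The genuine gap is that the argument stops exactly where the theorem's difficulty begins. Your final step is announced as a plan (``the plan is to pick \dots'', ``the main technical hurdle will be a case analysis \dots'') rather than executed: you never verify that the obstructions to $x'$ being $P_3$-simplicial in $G$ can always be converted either into an induced $3$-wheel or into a quadruple whose component is strictly smaller. Note also that the obstructions are broader than you describe: an induced $P_3$ in $G[N_G(x')]$ need not be ``built from $y$, $v$, and common neighbors of $y$'' --- since $N_G(x')\subseteq C_x\cup S$ and only the $C_x$-part is known to be $P_3$-free, the obstruction may consist of one separator vertex together with one or two vertices of $C_x$, and the adjacencies of separator vertices to $v$, to $y$, and to each other are unconstrained. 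Moreover, the replacement quadruple produced in such a case has a separator centered at a different middle vertex $y'$, so the strict inequality on component sizes requires showing that the new component of $x'$ (or of some other vertex) is contained in $C_x$ minus something --- this containment is not automatic and is not argued. Until that case analysis is written out and the monotonicity of the extremal choice is established, the proposal is an incomplete sketch: the approach (minimal separators and an extremal component, in the spirit of the ACTV arguments) is plausible, but the essential content of the theorem is in the part you deferred.
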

In fact, \cite{ACTV} proves that if $G$ has no induced subgraph
isomorphic to a $3$-wheel, then $G$ satisfies a stronger property
guaranteeing that a $\{P_3\}$-elimination ordering can be easily
found. We refer the reader to \cite{ACTV} for more details.
\subsection{Weighted Edges}\label{sec:weightkt}
In this section, we consider a weighted variant of the $K_t$ clique
cover problem.  Given a graph $G$, we assume that each $t$-clique $k
\subset V(G)$ is assigned a weight $w(k)$ representing the number of
times that $S$ must be covered. Our goal is to find a multiset $\cee$
of cliques in $G$ such that each $t$-clique $k$ is covered at least
$w(k)$ times. We formalize these notions as follows.
\begin{definition}
  Given a graph $G$ and an integer $t \geq 0$, let $\kay(G)$ be the
  family of all cliques in $G$, and let $\kay_t(G)$ be the family of
  all $t$-cliques in $G$.  Let $w : \kay_t \to \ints_{\geq 0}$ be a
  weight function on the $t$-cliques of $G$. A \emph{$(w,K_t)$-cover}
  of $G$ is a function $f : \kay(G) \to \nonneg$ such that
  $\sum_{k \subset K}f(K) \geq w(k)$ for all $k \in \kay_t(G)$, where the sum
  ranges over all $K \in \kay(G)$ with $k \subset K$.  When $f$ is a
  $(w,K_t)$-cover, we write $\cost(f)$ for the sum $\sum_{K \in
    \kay(G)}f(K)$.  The \emph{$(w,K_t)$-cover number} of $G$, written
  $\iw(G)$, is the minimum value of $\cost(f)$ over all
  $(w,K_t)$-covers of $G$.
\end{definition}
Observe that when $w(S) = 1$ for all $S \in \kay_t$, the
$(w,K_t)$-cover number of $G$ is just the $K_t$ clique cover number of
$G$.  We also define a corresponding dual problem.
\begin{definition}
 The \emph{$(w,K_t)$-clique packing number}
of $G$, written $\pw(G)$, is the optimum value of the following integer program:
\begin{align*}
  \text{maximize}\sum_{k \in \kay_t(G)}w(k)y(k),& \text{ subject to} \\
  \sum_{k \subset K}y(k) &\leq 1, \text{ for all $K \in \kay(G)$}, \\
  y(k) &\geq 0, \text{for all $k \in \kay_t(G)$} \\
  y(k) &\in \ints.
\end{align*}
A feasible solution to this integer program is called a
\emph{$(w,K_t)$-packing}. When $y$ is a $(w,K_t)$-packing write
$\valu(y)$ for $\sum_{k \in \kay_t(G)}w(k)y(k)$. (In some circumstances
it may be ambiguous which weight function is used to calculate $\valu(y)$, in
which case we write $\valu_w(y)$ to specify the weight function being used.)
\end{definition}

Let $\iw^*(G)$ and $\pw^*(G)$ denote the fractional relaxations
of $\iw(G)$ and $\pw(G)$, respectively. Standard LP duality gives
\[ \pw(G) \leq \pw^*(G) = \iw^*(G) \leq \iw(G). \]
We wish to show that when $G$ is semichordal, equality holds throughout.
  \begin{algorithm}
    \caption{Recursive algorithm $\optpair$ to produce a pair $(f,y)$,
      where $f$ is an optimal $(w,K_t)$-cover and $y$ is an
      optimal $(w,K_t)$-packing.\label{alg:optpair}}
    \begin{algorithmic}
      \IF{$G$ has no edges}
      \STATE{Return $(e, e)$, where $e$ is the
        empty function. }
      \ELSE
      \STATE{Let $v_1, \ldots, v_n$ be a
        $\{P_3\}$-elimination ordering of $G$.}
      \STATE{Let
        $Q_1, \ldots, Q_h$ be the components of $G[N(v_1)]$ of size at least $t-1$.}
      \COMMENT{Each $Q_i$ is a clique.}
      \STATE{Let $G' = G-v_1$.}
      \FORALL{$i \in \{1, \ldots h\}$}
      \STATE{Let $Q^*_i = \{v_1\} \cup Q_i$.}
      \STATE{Pick $Z_i \in \kay_{t-1}(Q_i)$ to maximize $w(\{v_1\} \cup Z_i)$ and let $t_i = w(\{v_1\} \cup Z_i)$.}
      \ENDFOR
      \STATE{Let $\lay = \kay_t(Q_1) \cup \cdots \cup \kay_t(Q_h)$.}
      \STATE{Let $w'(k) = w(k)$ for $k \in \kay_t(G') - \lay$ and let $w'(k) = \max\{0, w(k)-t_i\}$ for $k \in \kay_t(Q_i)$.}
      \STATE{Let $(f', y') = \optpair(G', w')$.}
      \STATE{Let $f(K) = f'(K)$ for $K \in \kay(G')$.}
      \STATE{Let $y(k) = y'(k)$ for $k \in \kay_t(G')$.}
      \COMMENT{$f$ and $y$ are only partially defined so far}
      \FORALL{$i \in \{1, \ldots, h\}$}
      \STATE{Let $f(Q^*_i) = t_i$.}
      \STATE{Let $y(\{v_1\} \cup Z) = 0$ for all $Z \in \kay_{t-1}(Q_i) - Z_i$.}
      \IF{$t_i=0$ or $y'(k) > 0$ for some $k \in \kay_t(Q_i)$}
      \STATE{Let $y(\{v_1\} \cup Z_i) = 0$.}
      \ELSE
      \STATE{Let $y(\{v_1\} \cup Z_i) = 1$.}
      \ENDIF
      \ENDFOR
      \STATE{Let $f(Q) = 0$ for all cliques $Q$ on which $f$ is not yet defined.}      
      \STATE{Return $(f,y)$.}
      \ENDIF
    \end{algorithmic}
  \end{algorithm}
\begin{theorem}
  If $G$ is semichordal, then for all $t \geq 1$ and all $w : \kay_t(G) \to \nonneg$,
\[ \iw(G) = \pw(G). \]
\end{theorem}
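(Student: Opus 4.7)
The plan is to prove, by induction on $|V(G)|$, that the algorithm $\optpair(G,w)$ defined above returns a pair $(f,y)$ with the following three properties: $f$ is a feasible $(w,K_t)$-cover, $y$ is a feasible $(w,K_t)$-packing, and $\cost(f) = \valu_w(y)$. Because feasibility gives $\valu_w(y) \le \pw(G)$ and $\cost(f) \ge \iw(G)$, and because the chain $\pw(G) \le \iw(G)$ has already been recorded from LP duality, matching primal--dual witnesses collapse everything to $\iw(G)=\pw(G)$. I would strengthen the induction hypothesis with the auxiliary invariant that $y(k)>0$ implies $w(k)>0$ for every $k \in \kay_t(G)$; this is preserved, because the only positive $y$-value assigned at the top level is $y(\{v_1\}\cup Z_i)=1$, which happens only when $t_i = w(\{v_1\}\cup Z_i)>0$.

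For the inductive step let $v_1$ be the first vertex in a $\{P_3\}$-elimination ordering; then $G[N(v_1)]$ has no induced $P_3$, so its components are cliques, of which $Q_1,\ldots,Q_h$ are those of size at least $t-1$. The algorithm places $t_i$ copies of $Q_i^* = \{v_1\}\cup Q_i$ in $f$, discounts the weight of each $t$-clique inside $Q_i$ by $t_i$, and recurses on $(G-v_1,w')$ to obtain $(f',y')$. For feasibility of $f$: any $t$-clique $k$ containing $v_1$ has $k\setminus\{v_1\}$ a $(t-1)$-clique in $N(v_1)$, hence lying inside a unique component of $G[N(v_1)]$, which must have size at least $t-1$ and therefore coincide with some $Q_i$; by the maximality of $Z_i$, $f(Q_i^*)=t_i\ge w(k)$. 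A $t$-clique $k\subseteq Q_i$ not containing $v_1$ is covered $t_i$ times by $Q_i^*$ and $w'(k)=\max\{0,w(k)-t_i\}$ further times by $f'$; any remaining $t$-clique of $G'$ satisfies $w'=w$ and is handled by $f'$. For feasibility of $y$: a clique $K$ with $v_1 \notin K$ lies in $G'$, so the packing constraint follows from $y'$; a clique $K \ni v_1$ has $K\setminus\{v_1\}$ inside some single $Q_j$, and the algorithm zeroes out $y(\{v_1\}\cup Z)$ for every $Z\in\kay_{t-1}(K\setminus\{v_1\})$ except possibly $Z=Z_j$, and whenever $y(\{v_1\}\cup Z_j)=1$ the algorithm has forced $y'(k)=0$ for every $k\in\kay_t(Q_j)$.

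The crux of the proof is the identity $\cost(f) = \valu_w(y)$. Direct expansion yields
\[ \cost(f) = \cost(f') + \sum_{i=1}^{h} t_i, \qquad \valu_w(y) = \sum_{k \in \kay_t(G')} w(k)\,y'(k) + \sum_{i \,:\, y(\{v_1\}\cup Z_i)=1} t_i, \]
and, using the inductive equality $\cost(f') = \valu_{w'}(y')$ together with $w(k)-w'(k) = \min\{w(k),t_i\}$ for $k\in\kay_t(Q_i)$, the whole identity reduces to checking, for each $i\le h$,
\[ t_i \;=\; \sum_{k \in \kay_t(Q_i)} \min\{w(k),t_i\}\,y'(k) \;+\; t_i\cdot\mathbf{1}\bigl[y(\{v_1\}\cup Z_i)=1\bigr]. \]
If the indicator is $1$, the algorithm has forced all $y'(k)$ with $k\in\kay_t(Q_i)$ to vanish and both sides equal $t_i$; if $t_i=0$ both sides vanish. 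The nontrivial case is $t_i>0$ with $y(\{v_1\}\cup Z_i)=0$: then some $y'(k^*)>0$ for $k^*\in\kay_t(Q_i)$, integrality of $y'$ combined with the packing constraint on the clique $Q_i$ forces $y'(k^*)=1$ to be the unique positive value there, and the auxiliary invariant then yields $w'(k^*)>0$, i.e.\ $w(k^*)>t_i$, so that $\min\{w(k^*),t_i\}=t_i$. I expect this to be the principal obstacle: without the invariant, one could have $w(k^*)\le t_i$ and $\min\{w(k^*),t_i\}=w(k^*)<t_i$, which would break the accounting on the right-hand side. The rest of the argument is essentially bookkeeping tied to the clique structure of $G[N(v_1)]$.
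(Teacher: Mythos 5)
Your proposal is correct and takes essentially the same approach as the paper: an inductive analysis of the algorithm $\optpair$ using the same auxiliary invariant ($y(k)>0$ implies $w(k)>0$), the same feasibility checks for $f$ and $y$, and an equivalent accounting of $\cost(f)=\valu_w(y)$. The only differences are cosmetic: you induct on $\sizeof{V(G)}$ rather than $\sizeof{E(G)}$ and organize the bookkeeping as a per-component identity with $\min\{w(k),t_i\}$, whereas the paper sums globally over the index set $R$.
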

\begin{proof}
  We adapt the argument of Scheinerman and Trenk~\cite{scheinerman-trenk}.
  We claim that Algorithm~\ref{alg:optpair} produces a $(w,K_t)$-cover $f$
  and a $(w,K_t)$-packing $y$ such that $\cost(f) = \valu(y)$,
  and such that $y(k) > 0$ only if $w(k) > 0$.

  Our proof proceeds by induction on $\sizeof{E(G)}$. When
  $\sizeof{E(G)} = 0$ it is clear that the pair of empty functions
  $(e,e)$ returned by Algorithm~\ref{alg:optpair} has the desired
  properties.

  Now suppose that $\sizeof{E(G)} > 0$, let $v_1, \ldots, v_n$ be the
  $\{P_3\}$-elimination ordering used in Algorithm~\ref{alg:optpair},
  and let $(f', y') = \optpair(G', w')$. By the induction hypothesis,
  $f'$ and $y'$ are feasible for their respective integer programs,
  and $\cost(f') = \valu(y')$.

  First we argue that $f$ is feasible. First observe that
  $\kay_t(G) = (\kay_t(G') - \lay) \cup (\kay_t(Q_1^*) \cup \cdots
  \cup \kay_t(Q_h^*))$, and that $f(K) = f'(K)$ for all
  $k \in \kay(G')$.

  For all $k \in \kay_t(G') - \lay$, we have $w'(k) = w(k)$, and the feasibility of $f'$
  implies that
  \[ \sum_{\substack{k \subset K\\ K \in \kay(G)}}f(K) \geq \sum_{\substack{k \subset K \\ K \in \kay(G')}}f'(K) \geq w'(k) =
    w(k) \] for all $k \in \kay_t(G')- \lay$.

  On the other hand, for $k \in \kay_t(Q_i)$, we have
  $w(k) \leq w'(k) + t_i$.  Since $f'$ is feasible and
  $f(Q^*_i) = t_i$, for these $t$-cliques we have
  \[ \sum_{\substack{k \subset K\\ K \in \kay(G)}}f(K) = f(Q^*_i) + \sum_{\substack{k \subset K\\ K \in \kay(G')}}f'(K) \geq t + w'(k) \geq w(k). \]
  
  The remaining $t$-cliques to consider are those in $\kay_t(Q^*_i) \setminus \kay_t(Q_i)$
  for some $i$, that is, the $t$-cliques containing $v_1$. Any such $t$-clique $k$ is of
  the form $k = \{v_1\} \cup Z$ for some $Z \in \kay_{t-1}(Q_i)$, and is contained in the
  clique $Q^*_i$. Since $f(Q^*_i) = t_i = \max_{Z}(w(\{v_1\} \cup Z))$, where the maximum
  is taken over all $Z \in \kay_{t-1}(Q_i)$, we see that for such $k$,
  \[ \sum_{\substack{k \subset K\\ K \in \kay(G)}} f(K) \geq f(Q^*_i) \geq w(k), \]  
  and so $f$ is feasible.

  Now we argue that $y$ is feasible. Consider any clique $K \in \kay(G)$. If $v_1 \notin K$, then $K \in \kay(G')$ and so $y(k) = y'(k)$
  for all $t$-cliques $k \subset K$, so by the induction hypothesis, we have
  \[ \sum_{k \subset K} y(k) = \sum_{k \subset K} y'(k) \leq 1. \]
  Thus, we may assume that $v_1 \in K$. This implies that $K \subset Q^*_i$
  for some $i$. Observe that
  \begin{align*}
    \sum_{k \subset K} y(k) &\leq \sum_{Z \in \kay_{t-1}(Q_i)}y(\{v_1\} \cup Z) + \sum_{k  \in \kay_t(Q_i)}y(k) \\
    &= y(\{v\} \cup Z_i) + \sum_{k  \in \kay_t(Q_i)}y'(k),
  \end{align*}
  where $\sum_{k \in \kay_t(Q_i)}y'(k) \leq 1$ by the feasibility of $y'$.
  Thus, if $y(\{v_1\} \cup Z_i) = 0$, then the constraint for $K$ is satisfied.
  The only way the algorithm allows $y(\{v_1\} \cup Z_i) > 0$ is when
  $y'(k) = 0$ for all $k \in \kay_t(Q_i)$, in which case the constraint
  is again satisfied.

  Next we argue that $y(k) > 0$ only if $w(k) > 0$. Consider any
  $k \in \kay_t(G)$ with $y(k) > 0$. If $v_1 \notin k$, then
  $k \in \kay_t(G')$, so the induction hypothesis implies that
  $w'(k) > 0$. Since $w'(k) \leq w(k)$, this implies that $w(k) > 0$
  as well. On the other hand, if $v_1 \in k$, then $y(k) > 0$ is only
  possible if $k = \{v_1\} \cup Z_i$ for some $i$ with $t_i >
  0$. Since $t_i = w(\{v_1\} \cup Z_i) = w(k)$, we again see that
  $y(k) > 0$ implies $w(k) > 0$.
  
  Finally we argue that $\cost(f) = \valu(y)$. Let $R$ be the set
  the of indices $i$ such that $y(\{v_1\} \cup Z_i) = 1$.
  Observe that
  \[
    \cost(f) = \cost(f') + \sum_{i \in R}f(Q_i^*) = \cost(f') + \sum_{i=1}^ht_i.
  \]
  By the induction hypothesis, $\cost(f') = \valu_{w'}(y')$. We wish
  to determine $\valu_{w}(y')$. Observe that
  \[ \valu_{w}(y') - \valu_{w'}(y') = \sum_{y'(k) = 1}[ w(k) - w'(k) ], \]
  and by the induction hypothesis, $y'(k) = 1$ implies that $w'(k) > 0$,
  so that $w'(k) = w(k) - t_i$. Hence,
  \[ \valu_{w}(y') - \valu_{w'}(y') = \sum_{y'(k) = 1}t_i = \sum_{i \notin R} t_i, \]  
  where the last equality holds because $i \notin R$ implies that either
  $t_i = 0$ or that $y'(k) > 0$ for some $k \in \kay_t(Q_i)$, in which case
  feasibility of $y'$ implies that there is exactly one $k$ for which this is true.
  The remaining cliques to count in $\valu(y)$ are the cliques $\{v_1\} \cup Z_i$
  where $i \in R$. Thus,
  \[ \valu(y) = \valu_{w}(y') + \sum_{i \in R} t_i = \valu_{w'}(y') + \sum_{i=1}^h t_i
    = \cost(f') + \sum_{i=1}^h t_i = \cost(f), \]
  as desired.
\end{proof}
\subsection{NP-hardness of $K_t$ clique cover problem on general graphs}
\label{sec:complexity}
In this section, we will prove that for any fixed $t \geq 1$, the $K_t$
clique cover problem is NP-hard. This justifies developing algorithms
to solve this problem on specialized graph classes such as the semichordal
graphs, since (unless $P=NP$) there can be no polynomial-time algorithm
to solve the problem on general graphs.

Kou, Stockmeyer, and Wong~\cite{KouStockmeyerWong78} proved that the
edge clique cover problem, i.e., the $K_2$ clique cover problem, is
NP-hard, via a reduction from the $K_1$ clique cover problem, which is
equivalent to vertex coloring in the complementary graph and thus
NP-hard. We generalize their approach, reducing the $K_{t-1}$ clique
cover problem to the $K_t$ clique cover problem for each $t \geq 2$,
which implies that each of these problems is NP-hard. We
formalize that $K_t$ clique cover problem as a decision problem
as follows:
\begin{center}
  \begin{tabular}{l}
    \hline
    $\KCC(t)$\\
    \quad \textbf{Input}: a graph $G$, and a number $k$;\\
    \quad \textbf{Output}: YES if $\tKtG \leq k$ and NO otherwise;\\
    \hline
  \end{tabular}  
\end{center}
\begin{proposition}
The decision problem $\KCC(t)$ is NP-complete for any constant $t \geq 1$.
\end{proposition}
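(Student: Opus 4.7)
\emph{Proof plan.} My plan is to establish NP-completeness by induction on $t$. Membership in NP is clear for any fixed $t$: a cover of size at most $k$ is a polynomial-size witness, verifiable in polynomial time by checking that each of the at most $k$ sets induces a clique of $G$ and that every one of the $O(n^t)$ many $t$-cliques of $G$ is contained in at least one of them.

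For NP-hardness, the base case $t=1$ is immediate since $\theta_{K_1}(G)=\chi(\overline G)$ and graph coloring is NP-hard. For the inductive step ($t\geq 2$), I would give a polynomial-time reduction from $\KCC(t-1)$ to $\KCC(t)$ that generalizes the Kou--Stockmeyer--Wong reduction. Given an instance $(G,k)$ of $\KCC(t-1)$, construct $G^{\star}$ by (i) adjoining a new vertex $v$ adjacent to every vertex of $G$, and (ii) for each $t$-clique $T$ of $G$, adjoining a new vertex $w_T$ whose neighborhood is exactly $T$; no edges are added among $v$ and the $w_T$'s or between distinct $w_T$'s. Set $k^{\star}:=k+k_t(G)$, which is computable in polynomial time for fixed $t$.

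The key structural observation is that since $v$ and the $w_T$'s are pairwise non-adjacent, no clique of $G^{\star}$ contains two of them; hence every $t$-clique of $G^{\star}$ is of exactly one of three types: a $t$-clique of $G$; a set $\{v\}\cup K$ with $K$ a $(t-1)$-clique of $G$; or a set $\{w_T\}\cup K$ with $K$ a $(t-1)$-subset of $T$. Every covering clique is either a clique of $G$, of the form $\{v\}\cup C$ with $C$ a clique of $G$, or of the form $\{w_T\}\cup C$ with $C\subseteq T$. Since a $\{v\}$-clique cannot cover any $\{w_T\}$-type $t$-clique, and vice versa, the cover requirement decouples: any $K_t$ cover of $G^{\star}$ needs at least $\theta_{K_{t-1}}(G)$ cliques containing $v$ (their projections to $G$ form a $K_{t-1}$ cover of $G$) and at least one clique containing $w_T$ for each $t$-clique $T$ of $G$. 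These bounds are simultaneously tight, achieved by lifting an optimal $K_{t-1}$ cover $\{C_1,\ldots,C_m\}$ of $G$ to $\{\{v\}\cup C_i\}$ and adjoining the $(t+1)$-cliques $\{w_T\}\cup T$ (which automatically also cover the $t$-cliques of $G$ themselves). This yields $\theta_{K_t}(G^{\star})=\theta_{K_{t-1}}(G)+k_t(G)$, so $\theta_{K_{t-1}}(G)\leq k$ iff $\theta_{K_t}(G^{\star})\leq k^{\star}$.

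The main obstacle I anticipate is a careful verification of the decoupling argument, namely ruling out any mixed cover that saves cliques by having a single clique do double duty; this follows routinely from the non-adjacencies deliberately built into $G^{\star}$. Since $|V(G^{\star})|=n+1+k_t(G)=O(n^t)$ for fixed $t$, the reduction is polynomial, completing the induction.
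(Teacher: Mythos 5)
Your proof is correct, but your reduction is genuinely different from the one in the paper. The paper follows Kou--Stockmeyer--Wong more literally: it adds $s = 1 + e$ mutually non-adjacent universal vertices (where $e$ is the number of $t$-cliques of $G$), sets $k' = sk + e$, and in the reverse direction uses a pigeonhole/averaging argument on the pairwise disjoint families $\D_i$ of cover cliques through each new vertex to extract one family of size at most $\lfloor k'/s\rfloor = k$, whose projection is a $K_{t-1}$ cover of $G$. You instead add a single universal vertex $v$ together with one low-degree gadget vertex $w_T$ per $t$-clique $T$, and prove the exact identity $\theta_{K_t}(G^{\star}) = \theta_{K_{t-1}}(G) + k_t(G)$; the decoupling you invoke is sound, since $v$ and the $w_T$'s are pairwise non-adjacent, so the cliques through $v$ and through distinct $w_T$'s are disjoint, the former project to a $K_{t-1}$ cover of $G$ and the latter number at least $k_t(G)$, while the cliques $\{v\}\cup C_i$ and $\{w_T\}\cup T$ match this bound (and the latter absorb the $t$-cliques of $G$ themselves). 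What each approach buys: your additive construction is arguably cleaner, avoiding the averaging step and yielding an exact formula for $\theta_{K_t}$ of the reduced graph, and it uses the same number $1+k_t(G)$ of new vertices; the paper's multiplicative construction ($k' = sk+e$ with $s=e+1$, so the additive term is negligible relative to $sk$) has the advantage of being gap-preserving, which is what lets the authors remark that the Kou--Stockmeyer--Wong inapproximability (no $c$-approximation with $c<2$ unless $P=NP$) should carry over to $\KCC(t)$; your reduction, with its additive offset $k_t(G)$ that can dominate $k$, would not preserve approximation ratios, though it fully suffices for NP-completeness as stated.
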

\begin{proof}
  We adapt the proof of Kou, Stockmeyer, and Wong.  It is obvious that
  $\KCC(t)$ is in NP.  We prove the NP-completeness of this problem by
  induction on $t$.  It is known that $\KCC(1)$ is
  NP-complete~\cite{Karp1972}.  Now suppose that $t \geq 2$ and that
  $\KCC(t-1)$ is NP-complete. We aim to show that $\KCC(t)$ is also
  NP-complete.

Let $G$ be an arbitrary graph of order $n$ and $k \geq 1$, and (mirroring the notation of Kou--Stockmeyer--Wong for the case $t=2$)
let $e = \sizeof{\kay_t(G)}$. Let $G'$ be the graph obtained from $G$ by introducing
\begin{itemize}
\item $s = 1 + e$ new vertices $\{u_1,\ldots, u_s\}$, and
\item $sn$ new edges that connect the new vertices to all existing vertices of $G$.
\end{itemize}
Since there are at most ${n \choose t}$ possible $t$-cliques in
$\kay_t(G)$, it is clear that this construction can be carried out in
polynomial time.  (Since $t$ is fixed in the decision problem
$\KCC(t)$, it does not matter that the degree of the polynomial
depends on $t$.)  Let $k' = sk + e$. We demonstrate that
$\theta_{K_{t-1}}(G) \leq k$ if and only if $\tKt(G') \leq k'$.

We first claim that if $\theta_{K_{t-1}}(G) \leq k$, then $\tKt(G') \leq k'$.
Suppose that $\A$ is a $K_{t-1}$ clique cover in $G$, with $\sizeof{\A} \leq k$.
For each $i \in \{1, \ldots, s\}$, let $\B_i = \{u_i \cup S \st S \in \A\}$,
and let $\B = \B_1 \cup \cdots \cup \B_s$. Now $\B$ covers every $t$-clique in $G'$
except perhaps for some $t$-cliques totally contained in $G$. Adding each such clique
to $\B$ separately yields a $K_t$ clique cover in $G'$ having at most $sk + e$
cliques, so that $\tKt(G') \leq sk+e = k'$.

It remains to show that if $\tKt(G') \leq k'$, then
$\theta_{K_{t-1}}(G) \leq k$. Suppose that $\D$ is a $K_t$ clique
cover in $G'$ with $\sizeof{\D} \leq k'$. For each
$i \in \{1, \ldots, s\}$, let $\D_i$ be the subset of cliques in $\D$
that contain the vertex $u_i$. Observe that for $i \neq j$, the vertices $u_i$ and $u_j$ are not adjacent,
so that $\D_i$ and $\D_j$ are disjoint. Hence,
\[
\sum_{i = 1}^s |\D_i| \leq |\D| \leq k'.
\]
Therefore, if $i_{\min}$ is an index such that $|\D_{i_{\min}}| = \min_{1 \leq i \leq s} |\D_i|$, then 
\[
|\D_{i_{\min}}| \leq \left\lfloor \dfrac{\sum_{i = 1}^s |\D_i|}{s} \right\rfloor \leq \left\lfloor \dfrac{k'}{s}
\right\rfloor
=\left\lfloor \dfrac{sk + e}{s} \right\rfloor = k,
\]
where the last equality holds because $s = 1 + e$. 
Then, by removing $u_{i_{\min}}$ from all cliques in $\D_{i_{\min}}$, we obtain a $K_{t-1}$ clique
cover of $G$ of size at most $k$. The proof follows. 
\end{proof}
We note briefly that Kou, Stockmeyer, and Wong~\cite{KouStockmeyerWong78} actually
proved a stronger property than NP-completeness of the edge clique cover problem:
they used a result of Garey and Johnson~\cite{GareyJohnsonNearOptimal}
on the inapproximability of the $K_1$ clique cover problem to prove
that if $P \neq NP$, then there is no $c$-approximation algorithm for the
edge clique cover problem for any $c < 2$. We believe that this inapproximability
proof extends to the $K_t$ clique cover problem by making the same modifications we made
to the NP-hardness proof, but in the interest of simplicity, we have chosen only
to present the NP-hardness version of the proof.

\section*{Acknowledgment}
This work was supported by the NSF grants 1527636 and 1527636. We thank the anonymous
referees for their careful reading and their helpful comments which improved the
presentation of the paper.

\bibliographystyle{plain}
\bibliography{TriangleCliqueCover,bibchordal}

\end{document}